\newtheorem{alg}{Construction}
\newtheorem{proposition}{Proposition}
\newtheorem{corollary}{Corollary}
\newtheorem{lemma}{Lemma}
\theoremstyle{remark}
\newtheorem{remark}{Remark}
\theoremstyle{definition}
\newtheorem{definition}{Definition}
\begin{document}

\title{Poristic virtues of a negative-pedal curve}


\author[L. G. Gheorghe]{Liliana Gabriela Gheorghe}



\maketitle













\textbf{Abstract.}

\small{We describe all  triangles that shares either circumcircle and pedal circle or  circumcircle and negative-pedal circle. Neither of these  pairs is  poristic; nevertheless,  the negative-pedal curve of the pedal-circle is a  (very) special i-conic that  points toward a poristic solution. Subsequently, other poristic pairs show up and the choreography swiftly begins.
 }
\bigskip

\bigskip

\bigskip

\section{Introduction}

\bigskip
Which triangles  share  circumcircle and pedal-circle? Which one share   circumcircle and  negative-pedal circle? Are these triangles related? Is it possible to
 draw  all of them?
 
 In this paper we give a poristic answer to this problem and provide functorial recipes 
to construct all these triangles.

The ground-case  obtains when pedal-point is the i-center:  
to find all triangles that share  in-circle and circumcircle. Of course, nowadays anyone knows that if two circles are, respectively the circumcircle and the i-circle of a reference triangle, then
they are so for infinitely many other triangles; as a mater a fact, any point of the circumcircle is a
vertex of one (and only one) triangle circumscribed to the i-circle. But till midst 1700, this phenomenon was not so well captured.
The algebraic relation between the radius of the i-circle and  circumcircle of a given triangle was proved  by Chapple in 1746 (see e.g. [OW] or [W]),
enforced the fact that not any two circles are meant to be i-circle and circumcirlce of some triangle.

 Nevertheless,  the first who understood the poristic nature 
 of this formula was
  Collin MacLaurin (1698-1746), a scotish mathematician, who proved a  special case of  Poncelet's porism: a porism for triangles and a pair of conics. For a self-contained proof of MacLaurin theorem, which use systems of triangles auto-polar w.r to a conic, see [GSO], section 9.5. An elementary  geometric proof of Poncelet porism, in its full generality is in [A]; see [P] for Poncelet's original proof.

When pedal-point is the orthocenter, the pedal circle is the (classic)
Euler circle (or the nine-point circle); as a mater a fact its proprieties were first proved by Poncelet and Brianchon in [BP] in relation with a problem of construction of a i-conic (the Apollonius hyperbola)!
The problem of finding all triangles that find all triangles that
shares the Euler-circle and circumcircle, was studied, with different methods in  [Ox],  [We] and quite recently  in
  [Pa],  who answered these questions in the acute case; in [G], the obtuse case is solved.
The approach we adopt here 
differs from those cited above.
The key observation is that the i-conic of a triangle, which have a prescribed focus into pedal point $D$ is  precisely the negative-pedal (curve)

--------------------------------------

\textbf{Keywords and phrases: } Euler circle, pedal circle, Poncelet's porism, negative-pedal curve, dual curve, polar reciprocal, circle inversion. 

\textbf{(2020)Mathematics Subject Classification: }51M04, 51M15, 51A05.

 of the pedal-circle w.r. to $D.$ 
This fact enables a poristic approach  that embedded
 a recipe for the construction of all poristic triangles, as well.
 
The  straightforwardness of the  proofs is  due to
a systematic use of inversive methods, polar duality and above all, to poristic virtues of the negative-pedal curve and to Poncelet's porism.

The reader not acquainted with inversive methods (circle inversion, dual curves, negative polar curve), may  consult either the  classic [P],[S],[Ch1],[Ch], or the beautiful books [A], [GSO]. For a very quick review, see [W] and the references therein.

\begin{figure}
\centering
\includegraphics[trim=100 50 200 0,clip,width=1.0\textwidth]{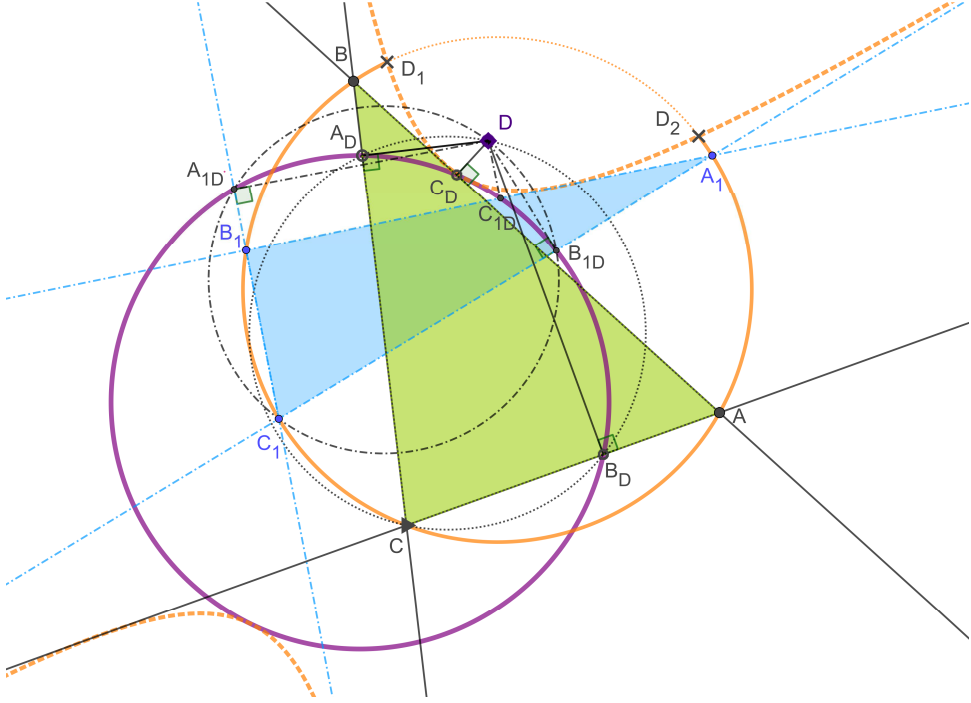}
\caption{ If
${\Gamma}$ (orange) is any circumconic of $\triangle{ABC}$, $\mathcal{E}_D$ the pedal-circle (purple) w.r. to $D$ and $\gamma_D$, (doted orange hyperbola) is the negative-pedal of  $\mathcal{E}_D$, then  $({\Gamma}, \gamma_D)$
form a poristic pair for $n=3.$
All triangles (e.g. green and blue) inscribed in $\Gamma$ and circumscribed to $\gamma_D$
share the same pedal-circle $\mathcal{E}_D.$
The arc $D_1D_2$ of $\Gamma,$ situated inside the conic $\gamma_D$ is infertile: it cannot contain vertices of such triangles, since there is no tangent line to $\gamma_D$ passing through these points.
}
\label{fig:2041Bis}
\end{figure}


\section{A pedal porism}

Let $\mathcal{T}$ a triangle, which we shall call reference triangle and let  a point $D$ neither on its sides, nor on its circumcircle, which we shall call  pedal-point.
\begin{definition}
  The triangle whose vertices are the projections of the pedal point $D$ on the sides of $\mathcal{T}$
     is the pedal triangle;  we call its circumcircle, $\mathcal{E_D},$  the
    pedal-circle.
    \label{defi:pedal-circle}
    \end{definition}
This definitions naturally extend those of classic Euler circle:
 if the pedal point $D$ is either the circumcenter, or the orthocenter, then pedal-circle is the (classic) Euler circle (or the nine-point circle).
Pedal-circles w.r. to i-centers  are simply i-circles (inscribed or exinscribed circle). For further notable pedal-circles and a comprehensive study of their proprieties,  see [PS].

 The following definition are classic.
 \begin{definition}
 The negative-pedal of a curve $\gamma$, w.r. to a pedal point $D$ 
  is the envelope of the perpendiculars through point  $P\in \gamma$, to line  $PD,$ as point $P$ sweeps  $\gamma$.
  We denoted it by  $\mathcal{N}(\gamma)$.
 \label{defi:negative_pedal}
 \end{definition}
 Negative pedal curves were 
  studied with some intensity by the end of the 1800; a interested reader might  see e.g. [Am1],[Am2]; for a quick acquaintance, see also [W].
 The following are two equivalent definitions of a polar dual of a curve.
 \begin{definition}
 The polar dual (or reciprocal) of a (regular) curve $\gamma$ w.r. to an inversion circle is the envelope of the polars of points $P,$ as $P$ sweeps $\gamma.$
 \end{definition}
  \begin{definition}
 The polar dual (or reciprocal) of a (regular) curve $\gamma$ w.r. to an inversion circle, denoted by $\mathcal{R}(\gamma)$ is the loci of the poles of the tangents $t_P$ to $\gamma,$ as  $P$ sweeps $\gamma.$
 \end{definition}
 Dual curves and the method of polar reciprocals are due to Poncelet; see 
 [P]; see also  [Ch], [Ch1]; see also [S].
 The following useful description of the negative-pedal, as a loci of points is classic.
 \begin{proposition}
 The negative-pedal of a curve $\gamma$, denoted by $\mathcal{N}(\gamma)$, 
 is the polar dual of its inverse: 
 $\mathcal{N}(\gamma)=\mathcal{R}(\gamma'),$ where $\gamma'$ is the inverse of $\gamma$ w.r. to  an inversion circle centred at the pedal point.
 \label{prop:negative_pedal_inverse}
 \end{proposition}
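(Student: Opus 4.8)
The plan is to exhibit an identification between the family of lines whose envelope defines $\mathcal{N}(\gamma)$ and the family of lines whose envelope defines $\mathcal{R}(\gamma')$, showing that these two families are in fact the \emph{same} family; equal families of lines have the same envelope, so the two curves coincide. Fix once and for all an inversion circle centred at the pedal point $D$, of radius $k$, and place the origin at $D$. First I would isolate the one elementary fact that links inversion to polarity: for a point $Q\neq D$, the polar of $Q$ with respect to this circle is the line orthogonal to $DQ$ through the inverse point $Q'$ of $Q$. Indeed, with $D$ as origin the polar of $Q$ is $\{X:\langle X,Q\rangle=k^2\}$, a line perpendicular to $DQ$ meeting the ray $DQ$ at distance $k^2/|DQ|$ from $D$, which is exactly $|DQ'|$. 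This is the only computation the argument needs.

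With this in hand I take an arbitrary $P\in\gamma$ and let $P'$ be its inverse, so that $P'$ sweeps $\gamma'$ as $P$ sweeps $\gamma$. By Definition~\ref{defi:negative_pedal} the defining line of $\mathcal{N}(\gamma)$ at $P$ is the perpendicular $n_P$ to $DP$ through $P$. On the other side I compute the polar of the point $P'\in\gamma'$: by the lemma above it is the perpendicular to $DP'$ through the inverse of $P'$. But inversion is an involution, so the inverse of $P'$ is $P$, and the ray $DP'$ coincides with the ray $DP$; hence the polar of $P'$ is precisely the perpendicular to $DP$ through $P$, i.e. the very line $n_P$. Thus the polar of each point of $\gamma'$ is a negative-pedal line of $\gamma$, and since $P\mapsto P'$ is a bijection off $D$ this correspondence is onto: the family $\{n_P:P\in\gamma\}$ and the family of polars of the points of $\gamma'$ are one and the same.

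Finally I invoke the envelope description of the dual: $\mathcal{R}(\gamma')$ is the envelope of the polars of the points of $\gamma'$ (the first of the two equivalent definitions), while $\mathcal{N}(\gamma)$ is by definition the envelope of the lines $n_P$. Since the two line-families coincide, so do their envelopes, giving $\mathcal{N}(\gamma)=\mathcal{R}(\gamma')$. I would remark that although $\gamma'$ and the operator $\mathcal{R}$ each depend on the chosen radius $k$, the lines $n_P$ themselves do not, so the identity holds for every inversion circle centred at $D$. The only genuinely delicate point is bookkeeping at the degenerate loci — points where $DP\to 0$, points where $\gamma$ meets the inversion circle (fixed points of the inversion), and the regularity needed for the pole-of-tangent versus envelope-of-polar dictionary — but these are absorbed by restricting to regular arcs off $D$ and by the equivalence of the two definitions of $\mathcal{R}$ already quoted.
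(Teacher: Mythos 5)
Your proof is correct, and it is the standard classical argument. Note that the paper itself offers no proof of this proposition: it is stated as ``classic,'' with the reader implicitly referred to the cited literature on polar reciprocals and negative pedals, so there is no in-paper argument to compare against. Your write-up supplies exactly the one fact the classical proof rests on --- that the polar of a point $Q$ with respect to a circle of radius $k$ centred at $D$ is the line through the inverse point $Q'$ perpendicular to $DQ$, verified by the computation $\langle X,Q\rangle=k^2$ --- and then the involutivity of inversion turns the polar of $P'\in\gamma'$ into precisely the negative-pedal line $n_P$ at $P\in\gamma$, so the two line families coincide and hence so do their envelopes. Your closing remark that the identity holds for every radius $k$ (since $n_P$ is $k$-independent while the $k$-dependences of $\gamma'$ and $\mathcal{R}$ cancel) is a worthwhile observation that the paper leaves implicit, as is your flagging of the degenerate loci (points of $\gamma$ at or near $D$, and the regular-arc hypothesis needed for the envelope-of-polars versus locus-of-poles dictionary), which matches the paper's standing assumption that the pedal point avoids the curve.
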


 Using Proposition ~\ref{prop:negative_pedal_inverse} and known facts on polar-dual of a circle, we get the following key result
 (see e.g. [La] and [Lo] for alternative approach)
 
 \begin{proposition}
 The negative-pedal of a circle 
 w.r. to  pedal point $D$ not on the refereed circle, is the
 conic centred at the center of the circle, whose focus is $D$ and
 whose main axis is (precisely) its  diameter  
  through $D.$ 
 
 The negative-pedal of a circle is either an ellipse 
 (when the pedal point is inside the circle) or a hyperbola
 (when $D$ is outside).
 If $D$ is on the circle, the pedal curve reduces to a point. The negative-pedal curve of a circle is never a parabola.
 \label{prop:circulo_pedal_negativo}
 \end{proposition}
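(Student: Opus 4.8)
The plan is to follow the route indicated by the statement's lead-in and reduce everything to Proposition~\ref{prop:negative_pedal_inverse}. Writing $\gamma$ for the given circle, with centre $O$ and radius $r$, and fixing an inversion circle centred at the pedal point $D$, we have $\mathcal{N}(\gamma)=\mathcal{R}(\gamma')$, where $\gamma'$ is the inverse of $\gamma$. The argument then rests on two classical inputs. First, since $D\notin\gamma$, the inverse $\gamma'$ is again a genuine circle (a circle not through the centre of inversion inverts to a circle). Second, the polar reciprocal of a circle with respect to a point $D$ is a conic having $D$ as a focus, with eccentricity equal to the ratio of the distance from $D$ to the circle's centre over its radius; this is exactly the ``known fact on polar-duals of a circle'' invoked above. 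Composing the two steps yields at once that $\mathcal{N}(\gamma)$ is a conic with focus $D$.

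Next I would settle the type by tracking the position of $D$ relative to the circle through the two operations. The key observation is that inversion centred at $D$ preserves whether $D$ is inside or outside a circle: the line $OD$ meets $\gamma$ in two points, which lie on opposite sides of $D$ exactly when $D$ is interior, and inversion (preserving signs along $OD$) sends them to the two endpoints of a diameter of $\gamma'$, again on opposite sides of $D$ precisely in the interior case. Hence $D$ is interior to $\gamma'$ if and only if it is interior to $\gamma$. Invoking the classical trichotomy for the polar dual of a circle—ellipse, parabola, hyperbola according as the reciprocation centre is inside, on, or outside the circle—we conclude that $\mathcal{N}(\gamma)$ is an ellipse when $D$ is inside $\gamma$ and a hyperbola when $D$ is outside, while the parabolic case is excluded because it would force $D\in\gamma'$, i.e. $D\in\gamma$. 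In that excluded borderline case $\gamma'$ degenerates to a line and its reciprocal to a single point, which is the stated collapse of the curve.

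It remains to locate the centre and the axis precisely, and here I expect the only genuine computation. Symmetry already fixes the axis direction: the whole configuration (circle $\gamma$, point $D$, inversion circle) is symmetric about the line $OD$, and both inversion and reciprocation respect this symmetry, so $\mathcal{N}(\gamma)$ is symmetric about $OD$; as the focus $D$ lies on it, $OD$ is the focal axis. To pin the centre at $O$ and the vertices on $\gamma$, the cleanest device is the reflection description of the negative pedal: the tangent line $\ell_P$ (perpendicular to $DP$ at $P$) is precisely the perpendicular bisector of $D$ and its reflection $D^{\ast}=2P-D$, and as $P$ runs over $\gamma$ the point $D^{\ast}$ runs over the circle $\Sigma$ centred at $E:=2O-D$ of radius $2r$. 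Thus $\mathcal{N}(\gamma)$ is the envelope of perpendicular bisectors of $DD^{\ast}$ with $D^{\ast}\in\Sigma$, i.e. the bifocal conic with foci $D$ and $E$ cut out by $|XD|\pm|XE|=2r$. Its centre is the midpoint of $D$ and $E$, which is exactly $O$; its focal axis is the line $DE=OD$; and its semi-major (resp. semi-transverse) axis equals $r$, so the two vertices are the endpoints of the diameter of $\gamma$ through $D$—precisely the claim about the main axis.

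This reflection computation also reproves the trichotomy independently, since $D$ is inside, on, or outside $\Sigma$ according as $2\,|OD|$ is less than, equal to, or greater than $2r$, i.e. according as $D$ is inside, on, or outside $\gamma$. The main obstacle is therefore not conceptual but a matter of metric bookkeeping: extracting the correct normalisation ($2a=2r$, centre $=O$), for which the reflection/directrix-circle picture is far more transparent than pushing explicit coordinates through the inversion and the reciprocation in succession.
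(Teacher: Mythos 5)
Your proof is correct, and it goes beyond what the paper itself records: the paper offers no written proof of Proposition~\ref{prop:circulo_pedal_negativo}, merely the one-line indication that it follows from Proposition~\ref{prop:negative_pedal_inverse} together with ``known facts on polar-duals of a circle,'' with [La] and [Lo] cited for alternatives. Your first two paragraphs are exactly that indicated route, carried out honestly: inversion of a circle not through $D$ is a circle, reciprocation of a circle about $D$ is a conic with focus $D$, and your observation that inversion centred at $D$ preserves whether $D$ is interior (via the two intersections of line $OD$ staying on the same rays from $D$) is a clean way to transfer the ellipse/hyperbola trichotomy. Your third paragraph, however, is a genuinely different and self-contained argument --- the reflection/director-circle description, in which the tangent at $P$ is the perpendicular bisector of $D$ and $D^{\ast}=2P-D$, so that $\mathcal{N}(\gamma)$ is the bifocal conic $|XD|\pm|XE|=2r$ with $E=2O-D$ --- and this is what actually \emph{proves} the metric content of the statement (centre at $O$, focal axis $OD$, semi-axis $a=r$, vertices at the ends of the diameter through $D$), which the citation route yields only after further computation, since the dual of the inverted circle is naturally described by focus, directrix and eccentricity rather than by its centre. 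It also reproves the trichotomy and excludes the parabola without reference to duality at all (a parabola has no second focus, while here $E$ always exists and $|DE|=2|OD|\neq 2r$ when $D\notin\gamma$). One small imprecision worth fixing: the parabolic case of reciprocation would require $D\in\gamma'$, but when $D\notin\gamma$ the point $D$ is \emph{never} on $\gamma'$ (its preimage under the inversion would be the point at infinity); the borderline is not $D\in\gamma'$ literally but $\gamma'$ degenerating to a line, which happens exactly when $D\in\gamma$ --- your own line-and-pole discussion of that degenerate case, and independently your director-circle computation, already close this gap.
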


 Before proceed, 
 let us  point out the key-fact, that ties-up i-conics,
pedal-circles and negative-pedal curves.

\begin{proposition} 
The i-conic  of $\mathcal{T}$ focused in $D$  
is the negative-pedal (curve) of its pedal-circle $\mathcal{E}_D$.
\label{proposition:Euler_circle_as_a_negative_pedal}
   \end{proposition}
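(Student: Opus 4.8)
The plan is to prove the statement by showing directly that $\mathcal{N}(\mathcal{E}_D)$ is a conic inscribed in $\mathcal{T}$ (tangent to all three sides) and having $D$ as a focus, and then to invoke uniqueness of the inscribed conic with a prescribed focus to identify it with the i-conic focused at $D$. The focus condition is, in fact, already free: by Proposition~\ref{prop:circulo_pedal_negativo}, $\mathcal{N}(\mathcal{E}_D)$ is a conic (an ellipse or a hyperbola, according as $D$ lies inside or outside $\mathcal{E}_D$) whose focus is exactly $D$. So the only genuine work is to establish the three tangencies, and the argument for these will be uniform in the ellipse/hyperbola dichotomy.

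The heart of the argument is the envelope description of the negative pedal given in Definition~\ref{defi:negative_pedal}: $\mathcal{N}(\mathcal{E}_D)$ is the envelope of the lines $\ell_P$, where $\ell_P$ passes through a point $P\in\mathcal{E}_D$ and is perpendicular to $DP$. First I would fix a side $s$ of $\mathcal{T}$ and let $P$ be the foot of the perpendicular dropped from $D$ onto $s$. By the very definition of the pedal circle (Definition~\ref{defi:pedal-circle}), $P$ is a vertex of the pedal triangle and hence lies on $\mathcal{E}_D$. Moreover $DP$ is, by construction, perpendicular to $s$ and $P\in s$; therefore the unique line through $P$ perpendicular to $DP$ is $s$ itself, i.e. $\ell_P=s$. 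Thus each of the three sides of $\mathcal{T}$ occurs as a member of the envelope family, and an envelope is tangent to each of its (smooth) members at the corresponding characteristic point. Consequently each side is tangent to $\mathcal{N}(\mathcal{E}_D)$, so this conic is inscribed in $\mathcal{T}$.

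Combining the two facts — the conic has focus $D$ and is tangent to all three sides — shows that $\mathcal{N}(\mathcal{E}_D)$ is an inscribed conic of $\mathcal{T}$ with focus $D$. To upgrade this from \emph{an} inscribed conic to \emph{the} i-conic focused at $D$, I would invoke uniqueness. Dually to ``five points determine a conic,'' five tangent lines determine a conic; and a conic has $D$ as a focus precisely when it is tangent to the two isotropic lines through $D$. Prescribing these two isotropic tangents together with the three sides amounts to five tangent lines, so the inscribed conic with focus $D$ is unique. (Equivalently, one may argue that its second focus is forced to be the isogonal conjugate of $D$ with respect to $\mathcal{T}$.) Hence $\mathcal{N}(\mathcal{E}_D)$ is exactly the i-conic of $\mathcal{T}$ focused at $D$.

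I expect the step most deserving of care to be the tangency claim rather than the focus claim: one must be certain that a side, appearing as a member $\ell_P$ of the family, is a \emph{genuine} tangent of the envelope and not an exceptional (singular or non-characteristic) member. Since $\mathcal{E}_D$ is a smooth circle not passing through $D$ and, by Proposition~\ref{prop:circulo_pedal_negativo}, $\mathcal{N}(\mathcal{E}_D)$ is a nondegenerate conic, every envelope line touches the conic at its characteristic point, so the contact is authentic; nonetheless this is the point where I would be most explicit. A secondary point is to state the uniqueness of the inscribed conic with a prescribed focus precisely, since it is exactly what turns the existence of the tangent-and-focused conic into the desired identification.
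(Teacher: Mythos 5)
Your proof is correct and takes essentially the route the paper intends: the paper's own ``proof'' is the single remark that the statement is a direct consequence of the definitions of the negative-pedal and the pedal-circle (deferring details to [GSO], Ch.\ 7.5), and your envelope argument --- the foot $P$ of the perpendicular from $D$ to a side lies on $\mathcal{E}_D$ by Definition~\ref{defi:pedal-circle}, and the envelope line $\ell_P$ at that foot is the side itself, so each side is tangent to $\mathcal{N}(\mathcal{E}_D)$, which by Proposition~\ref{prop:circulo_pedal_negativo} is a conic with focus $D$ --- is exactly that consequence spelled out. Your explicit uniqueness step (five tangent lines: the three sides plus the two isotropic lines through $D$; equivalently, the second focus is forced to be the isogonal conjugate of $D$) is a detail the paper leaves implicit, and making it explicit is a genuine improvement rather than a deviation.
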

   \begin{proof}
   The proof is a direct consequence of 
   the definition of the negative-pedal and pedal-circle 
   \end{proof}
   For a proof, see e.g. Chapter  7.5 [GSO]; for
   a nice construction of an i-conic  with a prescribed focus, see e.g.  [Ch1]; see also [GSO], Example 7.2.2.

   Now we may prove the key-ingredient of a  poristic approach.

  \begin{lemma} 
 Let $\mathcal {E}$  any circle, $D$ any point not on $\mathcal{E}$ and  $\gamma_D$ the negative-pedal of  $\mathcal {E}.$
 
  Then a  triangle have  pedal-circle $\mathcal{E}$ if and only if is circumscribed to $\gamma_D$ (its sides
 \footnote{ by "side" we mean the line that pass through two vertices of a triangle}
 tangents  the conic $\gamma_D$).
 \label{lemma:pedal_negativo}
 \end{lemma}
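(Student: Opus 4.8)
The plan is to funnel both implications through one pointwise dictionary between the points of $\mathcal{E}$ and the tangent lines of $\gamma_D$, read off directly from the envelope definition of the negative-pedal. For $P\in\mathcal{E}$ (automatically $P\neq D$, since $D\notin\mathcal{E}$) let $m_P$ denote the line through $P$ orthogonal to the segment $DP$. Two elementary observations do all the work. First, $m_P$ belongs to the very family of lines whose envelope is $\mathcal{N}(\mathcal{E})=\gamma_D$, so $m_P$ is tangent to $\gamma_D$. Second, the foot of the perpendicular dropped from $D$ onto $m_P$ is $P$ itself: the perpendicular from $D$ to $m_P$ runs along the line $DP$, which meets $m_P$ precisely at $P$. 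Thus $P\mapsto m_P$ assigns to each point of $\mathcal{E}$ a tangent of $\gamma_D$, and dropping a perpendicular from $D$ undoes this assignment.

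The forward implication is then immediate. If a triangle $\mathcal{T}$ has pedal-circle $\mathcal{E}$, its three pedal vertices $P_1,P_2,P_3$ --- the feet of the perpendiculars from $D$ to the sides --- lie on $\mathcal{E}$, and the side carrying $P_i$ passes through $P_i$ perpendicularly to $DP_i$; that is, side $i$ equals $m_{P_i}$. By the first observation each side is tangent to $\gamma_D$, so $\mathcal{T}$ is circumscribed to $\gamma_D$.

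For the converse the point is to identify the full pedal of $\gamma_D$ with respect to $D$. By Proposition~\ref{prop:circulo_pedal_negativo} the curve $\gamma_D$ is a genuine conic (ellipse or hyperbola, never a parabola) with focus $D$, and the focal pedal of such a conic --- the locus of feet of perpendiculars from the focus to its tangent lines --- is its auxiliary circle. The second observation already exhibits every point of $\mathcal{E}$ as such a foot, so this auxiliary circle contains $\mathcal{E}$; a circle contained in a circle must coincide with it, hence the pedal of $\gamma_D$ is exactly $\mathcal{E}$. (Equivalently, pedal and negative-pedal about a common pole are mutually inverse operations, as encoded in Proposition~\ref{prop:negative_pedal_inverse}, so the pedal of $\mathcal{N}(\mathcal{E})$ is $\mathcal{E}$.) Consequently, if a triangle $\mathcal{T}$ is circumscribed to $\gamma_D$, each of its three sides is a tangent of $\gamma_D$, and the foot of the perpendicular from $D$ to that side lands on $\mathcal{E}$. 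These three feet are distinct points of the circle $\mathcal{E}$ --- hence non-collinear --- so their circumcircle, the pedal-circle of $\mathcal{T}$, is $\mathcal{E}$.

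The step I expect to be the main obstacle is exactly the surjectivity buried in the converse: that \emph{every} tangent of $\gamma_D$ arises as some $m_P$ with $P\in\mathcal{E}$, equivalently that the focal pedal sweeps all of $\mathcal{E}$ and not merely an arc. The envelope construction hands us the inclusion (every $m_P$ is tangent, every foot lies on $\mathcal{E}$) for free; promoting it to an equality is what forces the appeal to the conic structure of $\gamma_D$ from Proposition~\ref{prop:circulo_pedal_negativo} --- and in the hyperbola case one must also remember that tangents to \emph{both} branches occur and that their feet nonetheless fill out the same auxiliary circle. The remaining checks are benign: $D\notin\mathcal{E}$ keeps $\gamma_D$ a smooth conic and every $m_P$ well defined, and distinct tangents of $\gamma_D$ have distinct feet, so a genuine triangle produces three distinct pedal vertices.
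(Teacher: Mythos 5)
Your proof is correct and takes essentially the same route as the paper: the forward direction reads each side as a member of the envelope family defining $\mathcal{N}(\mathcal{E})$, and the converse rests on the classical fact that the feet of the perpendiculars from the focus $D$ to the tangents of $\gamma_D$ all lie on one circle, namely the circle $\mathcal{E}$ whose negative-pedal is $\gamma_D$. The only difference is one of care, not of method: you explicitly justify (via the auxiliary-circle identification, or the involutive relation of Proposition~\ref{prop:negative_pedal_inverse}) that this focal pedal coincides with $\mathcal{E}$ rather than merely containing it, a point the paper's proof asserts without argument.
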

 \begin{proof}
 
Refer to figure~\ref{fig:2041Bis}

$\Rightarrow$ If $\triangle{ABC}$ have pedal-circle $\mathcal {E}$, then necessarily the feet of the perpendiculars from $D$ to the sides of this triangle are on $\mathcal {E}.$ 
Thus, by the definition of a negative-pedal curve as an envelope of lines (see ~\ref{defi:negative_pedal})
these sides  tangents the negative-pedal of circle $\mathcal {E},$ i.e.
the conic $\gamma_D.$

$\Leftarrow$
 By hypothesis, the conic $\gamma_D$ has a focus in $D$ and is inscribed in $\triangle{ABC};$
the feet of the
perpendiculars from $D$ to the sides of triangle determines the pedal-circle $\mathcal{E}_D$ of $\triangle{ABC}.$ On the other side, these feet of the perpendicular from the focus of the conic to the tangents to that conic,  
belong to one and the same circle,
$\mathcal{E}$, (the circle) whose negative-pedal is $\gamma_D$.
Thus, the pedal-circle $\mathcal{E}_D$ of $\triangle{ABC}$
is precisely $\mathcal{E}.$

\end{proof}
In other words, three arbitary  tangents  to a given conic $\gamma_D$ determine triangles whose  pedal-circle is  the (unique) circle which diameter is the main axis of the conic $\gamma_D.$

  This led to a first poristic result.
 \begin{proposition} (the pedal porism)
      Let  $\gamma_D$ the i-conic of $\mathcal{T}$ focused in $D$; then $\big(\mathcal{C}, \gamma_D \big)$ form a poristic pair.
      
        All triangles inscribed in $\mathcal{C}$ share the same pedal-circle $\mathcal{E}_D$
        if and only if are circumscribed to $\gamma_D.$
        \label{prop:poristica}
  \end{proposition}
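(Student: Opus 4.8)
The plan is to reduce the statement to two facts already in hand: the loci-characterization of Lemma~\ref{lemma:pedal_negativo}, and Poncelet's closure theorem, using the reference triangle $\mathcal{T}$ itself as the seed that ignites the porism. I take $\mathcal{C}$ to be the circumcircle of $\mathcal{T}$. First I would record the only thing that needs verifying by hand, namely that one triangle inscribed in $\mathcal{C}$ and circumscribed to $\gamma_D$ actually exists. But this is immediate: by hypothesis $\gamma_D$ is the i-conic of $\mathcal{T}$ focused in $D$, so $\gamma_D$ is inscribed in $\mathcal{T}$ and hence $\mathcal{T}$ is circumscribed to $\gamma_D$; and $\mathcal{T}$ is inscribed in its own circumcircle $\mathcal{C}$. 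Thus $\mathcal{T}$ is a triangle inscribed in $\mathcal{C}$ and circumscribed to $\gamma_D$. Since $\mathcal{C}$ and $\gamma_D$ are both conics (the latter an ellipse or a hyperbola by Proposition~\ref{prop:circulo_pedal_negativo}), Poncelet's closure theorem for $n=3$ applies, and the existence of this single closed triangle forces infinitely many: every admissible point of $\mathcal{C}$ is a vertex of exactly one triangle inscribed in $\mathcal{C}$ and circumscribed to $\gamma_D$. This is precisely the assertion that $(\mathcal{C},\gamma_D)$ is a poristic pair.

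For the biconditional I would simply specialize Lemma~\ref{lemma:pedal_negativo} to $\mathcal{E}=\mathcal{E}_D$. Since $\gamma_D$ is by construction the negative-pedal of the pedal-circle $\mathcal{E}_D$ (Proposition~\ref{proposition:Euler_circle_as_a_negative_pedal}), the lemma says that \emph{any} triangle has pedal-circle $\mathcal{E}_D$ if and only if it is circumscribed to $\gamma_D$. Restricting attention to triangles inscribed in $\mathcal{C}$ yields exactly the claimed equivalence, and combined with the porism of the first paragraph it shows that the whole Poncelet family inscribed in $\mathcal{C}$ and circumscribed to $\gamma_D$ shares the common pedal-circle $\mathcal{E}_D$.

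The main obstacle is the correct invocation of Poncelet's theorem in the non-nested case. When $D$ lies outside $\mathcal{E}_D$, the conic $\gamma_D$ is a hyperbola (Proposition~\ref{prop:circulo_pedal_negativo}), so $\gamma_D$ is not contained in the interior of $\mathcal{C}$ and the familiar "one conic inside the other" picture breaks down; one must appeal to the projective form of the closure theorem rather than the elementary nested version, and then check the reality and visibility of the tangent lines. Concretely, the arc of $\mathcal{C}$ lying inside $\gamma_D$ is infertile—no real tangent to $\gamma_D$ issues from such a point—so the parametrization of vertices by points of $\mathcal{C}$ must be restricted to the complementary arc. It is this regional bookkeeping, rather than any computation, that deserves the most care; everything else is a transcription of Lemma~\ref{lemma:pedal_negativo} and the classical porism.
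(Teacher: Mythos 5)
Your proposal is correct and follows essentially the same route as the paper, which obtains the pedal porism by combining Lemma~\ref{lemma:pedal_negativo} (a triangle has pedal-circle $\mathcal{E}_D$ iff it is circumscribed to its negative-pedal $\gamma_D$) with Poncelet's closure theorem seeded by the reference triangle $\mathcal{T}$ itself. Your extra care about the non-nested hyperbola case and the infertile arc matches the paper's own treatment, which relegates exactly that bookkeeping to the remark following the construction.
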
 
  And now the construction of all these triangles.
 \begin{alg}
 Refer to figure ~\ref{fig:2041Bis}.
 Let $C$ a point  located on $\mathcal{C}.$ Let
the circle of diameter $[CD]$ intercept ${\mathcal E}_D$ in
$A_D$ and $B_D.$
 The lines  $CA_D$ and $CB_D$ intercept (again) 
 $\mathcal{C}$ in $B$ and $A.$
 
 Then the line $AB$ is a tangent to $\gamma_D$ and the feet of $D$ over $AB$ is on circle $\mathcal{E}_D.$ All poristic triangles are obtainable in this manner.
 \end{alg}
 \begin{proof}
 By construction, $A_D$ and $B_D$ are 
  the feet of the perpendiculars from $D$ to $CA_D$ and $CB_D,$
 respectively. Since $\mathcal{E}_D$ is the negative-pedal of $\gamma_D,$ $CA_D$ and $CB_D$ are  two tangents to  $\gamma_D.$
 By Poncelet's  (MacLauren) porism, 
  $AB$ is  tangent to $\gamma_D$  since  $\big(\mathcal{C},\gamma_D\big)$
 form a poristic pair. The second assertion is now a consequence of the fact that $\mathcal{E}_D$ is the negative-pedal of $\gamma_D.$
 \end{proof}
 
 \begin{remark}
The proof above give an insight on infertile arcs of $\mathcal{C}.$ A necessary condition for this construction to work is the existence of tangents from $C$ to the i-conic $\gamma_D;$ and this happens iff $C$ is located on $\mathcal{C}$ and outside the i-conic (not on the  arc $D_1D_2$; see again figure ~\ref{fig:2041Bis}):
the  arc $D_1D_2$
 delimited by the intersection of the circumcircle with the i-conic focused in $O$ contain no vertices of admissible triangle.
\end{remark}
 
 \begin{remark}
  This construction still hold when the pedal point is $H$, which corresponds to classic Euler circles.
 This construction is different from those in [G] and also makes clear what happens in the obtuse case of an Euler circle. In this case, the pedal point $D$ lie outside the triangle yet inside the circumcircle.
 The i-conic $\gamma_D$ is a hyperbola and 
 this causes the infertile arcs.
 When the triangle is acute, its orthocenter is inside the triangle and the i-conic is an ellipses. In this case, the i-conic is inside the circumcircle, and any initial point $C$ is admissible.
 
 \end{remark}

\section{A polar porism}

In this  section,   we shall provide  another  construction of triangles sharing the same circumcircle and Euler negative-pedal circle, based on an ad-hoc polar-porism.

When not specified otherwise, 
we subtend that poles and polars, as well as the dual polars of curves or inversion are w.r. to inversion circle ${\mathcal{I}}$
centred at   $D.$

\begin{proposition}(a polar-porism)
Let $\mathcal{T}$ a  triangle and $\mathcal{C}$ its circumcircle 
and let an inversion circle $\mathcal{I}$ centered at a point $D,$ located neither on the sides of $\mathcal{T},$ nor on its circumcircle. The polars of the vertices of  $\mathcal{T}$,  w.r. to $\mathcal{I}$, determines a new  triangle $\mathcal{T}_p$;
let $\mathcal{C}_p$ its circumcircle.
Let $\gamma_D$ the i-conic of $\mathcal{T}$ focused in $D$

Then $\big[\mathcal{C}_p, \mathcal{C}\big]$ form a 
polar-poristic pair of circles, in the following sense: if from  (any) point $A$ of circle $\mathcal{C}$ we  let 
the polar of $A$ intercept circle $\mathcal{C}_p$ 
in two distinct points $B_p$ and $C_p,$ and subsequently the polars of 
$B_p$ and $C_p$ intercept $\mathcal{C}$ in $A$, $B$ and $A$, $C$, respectively.
 
 Finally, let $A_p$ be the pole of $BC;$ then
 
 i) $\triangle{ABC}$ and
$\triangle{A_p B_p C_p}$ are  polars:
the vertices of the former are the poles of the later and vice-verse;

ii) $A_p$ is on circle $\mathcal{C}_p.$   
\label{lemma:poristic2052}
\end{proposition}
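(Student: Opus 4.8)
The plan is to prove the two assertions by completely different means: part (i) is a purely formal consequence of the reciprocity of pole and polar (La Hire), whereas part (ii) carries the genuine poristic content and will be reduced to Poncelet's closure theorem applied to $\mathcal{C}_p$ together with a conic inscribed in $\mathcal{T}_p$. For (i) I would only record the incidences forced by La Hire. Since $B_p$ and $C_p$ both lie on the polar of $A$, the line $B_pC_p$ \emph{is} the polar of $A$, so $A$ is the pole of $B_pC_p$. By construction the polar of $B_p$ is the line $AB$ and the polar of $C_p$ is the line $AC$; reciprocity then places $B_p$ on the polars of $A$ and of $B$, and $C_p$ on the polars of $A$ and of $C$. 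Since $A_p$ is by definition the pole of $BC$, hence (again La Hire) a point of the polars of $B$ and of $C$, the three sides of $\triangle A_pB_pC_p$ are identified with the polars of $A,B,C$ and the three vertices $A_p,B_p,C_p$ with the poles of $BC,CA,AB$. This is exactly the assertion that $\triangle ABC$ and $\triangle A_pB_pC_p$ are mutually polar, so (i) requires no computation.

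For (ii) the decisive preliminary step is to recognise the conic hidden in the figure. I would introduce $\mathcal{R}(\mathcal{C})$, the polar reciprocal of $\mathcal{C}$ with respect to $\mathcal{I}$. By Propositions \ref{prop:negative_pedal_inverse} and \ref{prop:circulo_pedal_negativo}, $\mathcal{R}(\mathcal{C})=\mathcal{N}(\tilde{\mathcal{C}})$, the negative-pedal of the circle $\tilde{\mathcal{C}}$ inverse to $\mathcal{C}$, and is therefore a conic with focus $D$. As the sides of $\mathcal{T}_p$ are the polars of the points $A,B,C\in\mathcal{C}$, they are tangent to $\mathcal{R}(\mathcal{C})$; thus $\mathcal{R}(\mathcal{C})$ is a focus-$D$ conic inscribed in $\mathcal{T}_p$, i.e. the i-conic of $\mathcal{T}_p$ focused at $D$. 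Applying the pedal porism (Proposition \ref{prop:poristica}) to $\mathcal{T}_p$ then gives that $\big(\mathcal{C}_p,\mathcal{R}(\mathcal{C})\big)$ is a Poncelet pair for $n=3$, $\mathcal{T}_p$ itself being one triangle inscribed in $\mathcal{C}_p$ and circumscribed about $\mathcal{R}(\mathcal{C})$.

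Now I would run the closure. In the construction $A,B,C$ all lie on $\mathcal{C}$, so the three sides of $\triangle A_pB_pC_p$, namely the polars of $A,B,C$, are all tangent to $\mathcal{R}(\mathcal{C})$, while $B_p,C_p\in\mathcal{C}_p$. Starting the Poncelet triangle at $B_p$, the first tangent (the polar of $A$, equal to $B_pC_p$) meets $\mathcal{C}_p$ again at $C_p$; the second tangent from $C_p$ is forced to be the polar of $C$, because the two tangents to $\mathcal{R}(\mathcal{C})$ through $C_p$ are the polars of the two points $A,C$ in which the polar of $C_p$ cuts $\mathcal{C}$. This second tangent meets $\mathcal{C}_p$ again at a point $X$, and Poncelet's closure theorem forces the chain to close: the remaining side $XB_p$ is tangent to $\mathcal{R}(\mathcal{C})$, and being a tangent through $B_p$ distinct from the polar of $A$, it must be the polar of $B$. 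Hence $X$ lies on the polar of $B$ and on the polar of $C$, so $X$ is the pole of $BC$, that is $X=A_p$; since $X\in\mathcal{C}_p$ by construction, this yields $A_p\in\mathcal{C}_p$, which is (ii).

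The main obstacle I anticipate is not any estimate but the bookkeeping of which polar is which side and which of the two tangents issuing from a given vertex is being followed; the chain closes onto $A_p$ only if these identifications (the second tangent from $C_p$ being the polar of $C$, the closing tangent from $X$ being the polar of $B$) are made consistently, and this is the delicate point of the argument. A secondary matter to pin down is the legitimacy of calling $\mathcal{R}(\mathcal{C})$ the i-conic of $\mathcal{T}_p$, which rests on the uniqueness of the inscribed conic with prescribed focus $D$ (implicit in Proposition \ref{proposition:Euler_circle_as_a_negative_pedal} and Lemma \ref{lemma:pedal_negativo}) and justifies invoking the pedal porism; one should also keep the standing genericity hypotheses of the statement (distinct intersection points, $D$ a legitimate pedal point of $\mathcal{T}_p$) in force throughout.
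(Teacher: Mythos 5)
Your proof is correct, and it is the dual rendition of the paper's argument rather than a copy of it. The paper also disposes of (i) via mutual polarity, but for (ii) it works with \emph{both} dual conics: it identifies $\gamma_D$, the dual of $\mathcal{C}_p$, as the i-conic of $\mathcal{T}$ focused at $D$, runs Poncelet's closure on the primal pair $\big(\mathcal{C},\gamma_D\big)$ — the tangents $AB$, $AC$ from $A$ force $BC$ to touch $\gamma_D$ — and then concludes in a single dualization step: the pole $A_p$ of a tangent to $\gamma_D$ lies on the dual of $\gamma_D$, i.e.\ on $\mathcal{C}_p$. You instead run the closure on the dual pair $\big(\mathcal{C}_p,\Gamma_D\big)$ with $\Gamma_D=\mathcal{R}(\mathcal{C})$ the i-conic of $\mathcal{T}_p$, chaining $B_p\to C_p\to X\to B_p$ and identifying $X=A_p$ by La Hire bookkeeping (second tangent from $C_p$ is the polar of $C$, closing tangent is the polar of $B$, so $X$ is the pole of $BC$). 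The paper itself remarks (caption of figure~\ref{fig:2052_2}) that the pairs $\big(\mathcal{C},\gamma_D\big)$ and $\big(\mathcal{C}_p,\Gamma_D\big)$ are dual images of each other, so the two proofs are mirror images of one argument. What each buys: the paper's route needs the involutivity of duality but gets $A_p\in\mathcal{C}_p$ instantly and only ever reasons about tangency of the visible sides $AB,AC,BC$; your route needs only the one conic $\Gamma_D$ and exhibits $A_p$ constructively as the second intersection of the polar of $C$ with $\mathcal{C}_p$, at the price of the delicate tangent-identification step you correctly flag — which of the two tangents through each of $B_p$, $C_p$, $X$ is which, a point where a careless writer could close the chain onto the wrong polar. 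Your caution about the uniqueness of the focus-$D$ inscribed conic (needed to call $\mathcal{R}(\mathcal{C})$ \emph{the} i-conic of $\mathcal{T}_p$ and invoke Proposition~\ref{prop:poristica}) and about genericity ($B_p\neq C_p$, nondegenerate chain, $D$ admissible for $\mathcal{T}_p$) is warranted; the paper glosses over both, implicitly assuming the same hypotheses.
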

\begin{proof} 
First note that, by construction and by the fundamental theorem on pole-polars, triangles $\mathcal{T}$
and $\mathcal{T}_p$ are mutually polars

Let $\gamma_D$ and $\Gamma_D$ be, respectively, the duals 
of circles $\mathcal{C}_p$ and $\mathcal{C}.$ 

 Since $\mathcal{C}_p$ is the circumcircle of $\mathcal{T}_p,$ then its dual $\gamma_D$ is
the i-conic focused in $D$ of triangle $\mathcal{T}.$ 
 
Similarly,  $\Gamma_D$, the dual of $\mathcal{C}$ is 
the i-conic focused in $D$ of  $\mathcal{T}_p$.

By Poncelet's porism,  $\big(\mathcal{C},\gamma_D\big)$ form a poristic pair for $n=3,$  since triangle
$\mathcal{T}$ is 
inscribed into the former and circumscribed to the later. Similarly, since by hypothesys 
$\mathcal{T}_p$ is inscribed in $\mathcal{C}_p$ and circumscribed to $\Gamma_D,$
the  $\big(\mathcal{C}_p,\Gamma_D\big)$ form a poristic pair for $n=3.$ 

Thus, if 
$A\in \mathcal{C}$ is any point and $B_pC_p$ is its polar 
$(B_p,C_P\in \mathcal{C}_p)$ then 
 $B_p C_p$ is a tangent to the conic $\Gamma_p,$ 
since the later is, by hypothesis,  the  dual of $\mathcal{C}$, hence the envelope of the polars of (all) points in  $\mathcal{C}$.

Similarly, since by construction, $B_p$ and $C_p$ are on the circle $\mathcal{C}_p,$
their polars  are the tangents from $A$ to   $\gamma_D,$ the dual  of $\mathcal{C}_p,$  again by the definition of a dual curve and by 
the fundamental theorem on pole-polars.

So, if these polars intercept circle $\mathcal{C}$ in $A,C$ and $A,B$ respectively, then $AB$ and $AC$ tangent 
$\gamma_D$ and $A,B,C$ are all on $\mathcal{C}.$ 
Thus, 
by Poncelet's porism, the line $BC$  necessarily  tangent
the  $\gamma_D.$  
Therefore, its pole, which is, by hypothesis,
the point $A_p$ is on the dual of $\Gamma_p,$ the circle  
$\mathcal{C}_p.$

Thus, triangles $\triangle{ABC}$
and $\triangle{A_p B_p C_p}$ are mutually polar
and their vertices are located on $\mathcal{C}$ and $\mathcal{C}_p$ respectively.
\end{proof}

\begin{figure}

\centering
\includegraphics[trim=0 0 0 0,clip,width=1.0\textwidth]{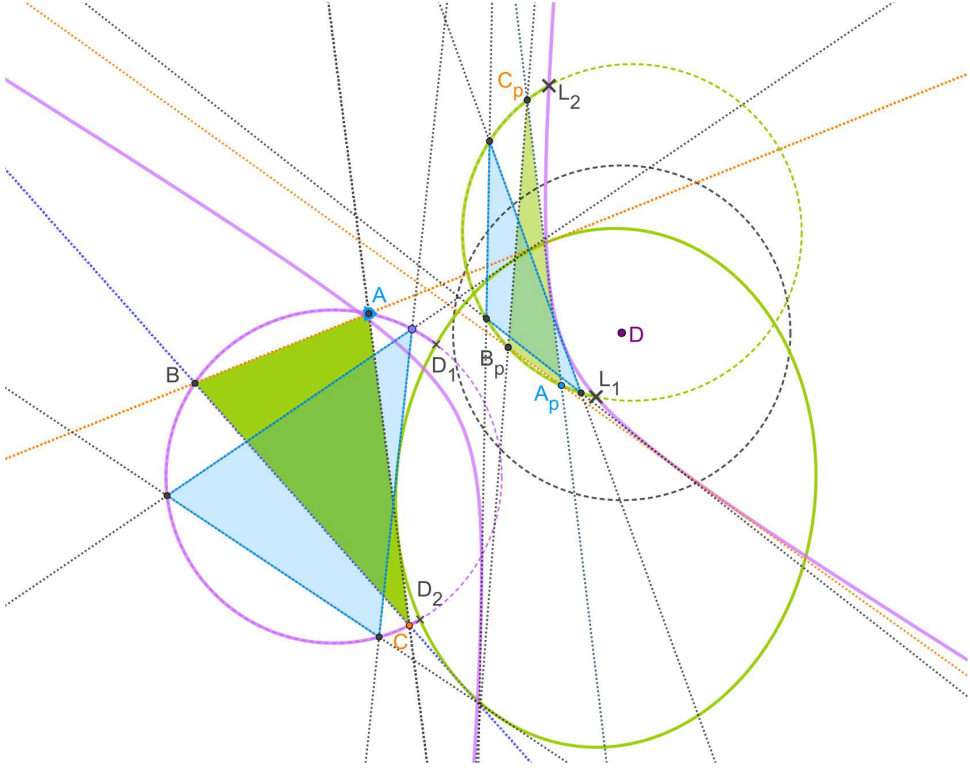}
\caption{ $\mathcal{C}$ is the circumcircle of 
$\triangle{ABC}$ (violet circle) and 
 $\mathcal{C}_p$ (green circle), the circumcircle of the polar triangle $\triangle{A_p B_p C_p}.$
 $\mathcal{C}_p$ is fixed, does not depend on point $A$ on $\mathcal{C}.$
 The  dual of $\mathcal{C}$  
w.r. to inversion circle centred in $D$
is  $\Gamma_D$ (violet hyperbola) and the dual of 
$\mathcal{C}_p$ is 
$\gamma_D$ (green ellipse). 
Then $\gamma_D$ is the i-conic of $\triangle{ABC}$
and $\Gamma_D$ is the i-conic of $\triangle{A_p B_p C_p}.$
 There are three poristic pairs:
i) $\big(\mathcal{C},\gamma_D);$ the fertile arc
$D_1D_2$ of $\mathcal{C}$ lies outside the i-conic; ii) $\big(\mathcal{C}_p,\Gamma_D);$
the fertile arc $L_1L_2$ of $\mathcal{C}_p$ lies outside the i-conic $\Gamma_D;$
iii) the polar-poristic 
$\big(\mathcal{C},\mathcal{C}_p\big);$
(shown $\triangle{ABC}$ and its polar 
$A_p B_p C_p)$.
The poristic pairs $\big(\mathcal{C},\gamma_D)$ and $\big(\mathcal{C}_p,\Gamma_D)$ are dual images of each other.
}
\label{fig:2052_2}

\end{figure}

\begin{figure}
\centering
\includegraphics[trim=10 0 0 0,clip,width=1.0\textwidth]{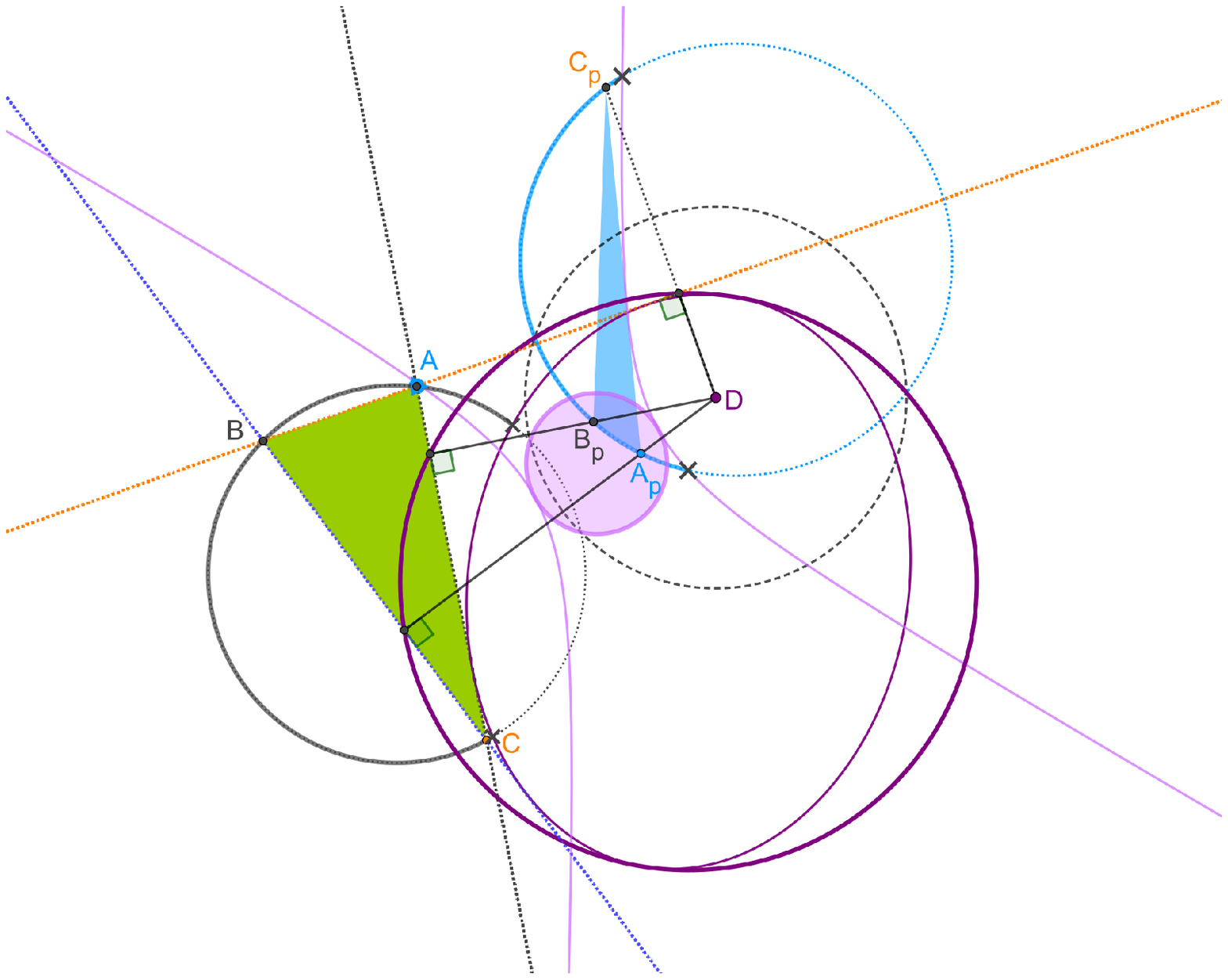}
\caption{
$\mathcal{E}_D$ the pedal-circle of 
$\triangle{ABC}$
(purple) coincides with the inverse of $\mathcal{C}_p$, the circumcircle of the polar triangle
$\triangle{A_p B_p C_p}$ (blue).Similarly,  the pedal of the polar triangle (solid violet) is the inverse of $\mathcal{C}$, the circumcircle of the original.
The perpendiculars from $D$ to the sides of $\triangle{ABC}$ intercepts them  on points located on pedal-circle $\mathcal{E}_D$ and
pass through the vertices of 
$\triangle {A_p B_p C_p}.$ 
}
\label{fig:pedal-conic}
\end{figure}

The polar-porism revealed in Proposition
~\ref{lemma:poristic2052} implicitly prove the following.

\begin{corollary} 

i) The poles $A_p,B_p,C_p$ of the sides of  triangles $\triangle{ABC}$ (which are) inscribed into a circle $\mathcal{C}$ describes a given  circle,  $\mathcal{C}_p$ if and only if the sides of (those)
$\triangle{ABC}$ tangents a  (one and the same) conic, $\gamma_D$.
  
 ii)  In this case, the sides of the respective polar triangle
 $\triangle{A_p B_p C_p}$ tangents a caustic conic
 $\Gamma_p.$
 \label{cor:caustic_conic}   
\end{corollary}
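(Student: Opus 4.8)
The plan is to read the corollary as nothing more than a translation, through polar reciprocation, of the porism already established in Proposition~\ref{lemma:poristic2052}. The single engine I would use is the involutory nature of the dual map $\mathcal{R}$ w.r.\ to the inversion circle $\mathcal{I}$ centred at $D$, together with the two halves of the duality dictionary: by Propositions~\ref{prop:negative_pedal_inverse} and~\ref{prop:circulo_pedal_negativo}, the dual of a circle not through $D$ is a conic with focus $D$, and (applying $\mathcal{R}$ once more) the dual of such a conic is again a circle. The one fact I keep invoking is the pole--tangent correspondence built into the definition of $\mathcal{R}$: a line is tangent to a curve $\gamma$ exactly when its pole lies on $\mathcal{R}(\gamma)$, and this correspondence is a bijection because $\mathcal{R}\circ\mathcal{R}=\mathrm{id}$.

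For part i) I would argue both implications purely by dualizing, the only care being that the conic and the circle in play are \emph{globally} fixed along the family, which is precisely what Proposition~\ref{lemma:poristic2052} guarantees (the circle $\mathcal{C}_p$ there does not depend on the chosen vertex). For the direction ($\Leftarrow$): if the three sides of every inscribed $\triangle{ABC}$ are tangent to one and the same conic $\gamma_D$ having focus $D$, then each of the poles $A_p,B_p,C_p$ is the pole of a tangent to $\gamma_D$, hence lies on $\mathcal{R}(\gamma_D)$; since $\gamma_D$ has focus $D$, this dual is a fixed circle, namely $\mathcal{C}_p$. For the direction ($\Rightarrow$): if all the poles $A_p,B_p,C_p$ lie on one fixed circle $\mathcal{C}_p$, then each side of $\triangle{ABC}$ is the polar of a point of $\mathcal{C}_p$, hence is tangent to $\mathcal{R}(\mathcal{C}_p)$, which is a fixed conic with focus $D$, namely $\gamma_D$. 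Thus ``poles on a fixed circle'' and ``sides tangent to a fixed conic focused in $D$'' are literally dual statements, and the equivalence follows.

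For part ii) I would simply run the same dictionary on the polar triangle. By Proposition~\ref{lemma:poristic2052} ii) the polar triangle $\triangle{A_p B_p C_p}$ is inscribed in $\mathcal{C}_p$, and by construction its sides $B_pC_p$, $C_pA_p$, $A_pB_p$ are the polars of the vertices $A,B,C$, which lie on $\mathcal{C}$. Therefore each such side is tangent to $\mathcal{R}(\mathcal{C})$, a fixed conic with focus $D$, which I would name $\Gamma_p$; it coincides with the conic $\Gamma_D$ of Proposition~\ref{lemma:poristic2052}, the i-conic of $\triangle{A_p B_p C_p}$ focused in $D$. So the sides of the polar triangle envelope the single caustic conic $\Gamma_p$, as claimed.

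The step I expect to be the only real subtlety is not any computation but the passage from the per-triangle statement to the uniform one: for a single triangle the three poles always lie on \emph{some} conic, so the content of i) resides entirely in the word \emph{fixed}. I would therefore make sure the argument never produces a circle (or conic) that secretly depends on the choice of $A$; this is exactly where I lean on Proposition~\ref{lemma:poristic2052}, which pins down $\mathcal{C}_p$ (and dually $\gamma_D$) once and for all. A secondary point to flag, for the forward implication of i), is that the conic must have its focus at $D$ for its dual to be a circle rather than a general conic; here this is automatic, since every conic that occurs is an i-conic focused in $D$.
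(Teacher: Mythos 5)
Your proposal is correct and takes essentially the same route as the paper: the paper offers no separate proof, stating that the corollary is implicitly proved by Proposition~\ref{lemma:poristic2052}, whose argument is exactly the duality dictionary you invoke (the duals of $\mathcal{C}$ and $\mathcal{C}_p$ are the fixed conics $\Gamma_D$ and $\gamma_D$ focused in $D$, together with the pole--tangent correspondence and the involutivity of $\mathcal{R}$). Your explicit attention to the word \emph{fixed} and to the need for the focus at $D$ (so that the dual is a circle rather than a general conic) spells out details the paper leaves tacit, but the underlying argument is the same.
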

 The following result relates pedal-circle of the original (triangle) with the circumcircle of its  polar triangle.

 \begin{proposition}
 The pedal-circle of a triangle 
  is the inverse of the circumcircle of its polar triangle, w.r. to an inversion circle centered into the pedal-point.
 \end{proposition}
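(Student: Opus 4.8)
The plan is to show that inversion in $\mathcal{I}$ carries the vertices of the polar triangle $\triangle A_p B_p C_p$ to the feet of the pedal triangle, and then to invoke that inversion preserves circles. First I would record the single fact that does all the work: for a line $\ell$ not through $D$, the pole of $\ell$ with respect to $\mathcal{I}$ is exactly the inverse (in $\mathcal{I}$) of the foot of the perpendicular dropped from $D$ onto $\ell$. Indeed, both the pole and that foot lie on the perpendicular from $D$ to $\ell$; and if $d=d(D,\ell)$ and $r$ is the radius of $\mathcal{I}$, the pole sits at distance $r^{2}/d$ from $D$ while the foot sits at distance $d$, so that the product of their distances to $D$ equals $r^{2}$, which is precisely the defining relation of inversion. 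I expect this identity to be the only genuinely nontrivial step.

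Applying it to $\triangle ABC$: the vertex $A_p$, being by construction the pole of the side $BC$, is the inverse of the foot $F_A$ of the perpendicular from $D$ to $BC$; likewise $B_p$ and $C_p$ are the inverses of the feet $F_B$ and $F_C$ on the sides $CA$ and $AB$. By Definition~\ref{defi:pedal-circle}, the feet $F_A,F_B,F_C$ are the vertices of the pedal triangle and therefore lie on the pedal-circle $\mathcal{E}_D$. Equivalently, the inverses of $A_p,B_p,C_p$ are three points of $\mathcal{E}_D$.

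Now I would invoke that inversion in $\mathcal{I}$, whose centre $D$ lies on neither circle since $D$ is off the sides of $\mathcal{T}$, sends a circle not through the centre to a circle. The image of the circumcircle $\mathcal{C}_p$ of $\triangle A_p B_p C_p$ is thus again a circle, and it passes through the inverses $F_A,F_B,F_C$ of $A_p,B_p,C_p$; being the unique circle through these three points, it is the circumcircle of the pedal triangle, namely $\mathcal{E}_D$. Since inversion is an involution, this says precisely that $\mathcal{E}_D$ is the inverse of $\mathcal{C}_p$, which is the assertion.

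Alternatively, the statement drops out of the duality already in place, bypassing the pole-of-a-line computation. In the proof of Proposition~\ref{lemma:poristic2052} the dual $\mathcal{R}(\mathcal{C}_p)$ is identified with the i-conic $\gamma_D$ of $\mathcal{T}$; on the other hand, by Proposition~\ref{proposition:Euler_circle_as_a_negative_pedal} together with Proposition~\ref{prop:negative_pedal_inverse}, this same i-conic equals $\mathcal{N}(\mathcal{E}_D)=\mathcal{R}(\mathcal{E}_D')$, where $\mathcal{E}_D'$ denotes the inverse of $\mathcal{E}_D$. As the polar dual $\mathcal{R}$ is involutive, hence injective on curves, cancelling it gives $\mathcal{C}_p=\mathcal{E}_D'$, and the involutivity of inversion once more exhibits $\mathcal{E}_D$ as the inverse of $\mathcal{C}_p$.
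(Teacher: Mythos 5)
Your main argument is correct, but it takes a genuinely different route from the paper. The paper proves this proposition entirely at the level of curves: it writes the i-conic as $\gamma_D=\mathcal{R}\bigl[\mathcal{I}(\mathcal{E}_D)\bigr]$ (combining Proposition~\ref{proposition:Euler_circle_as_a_negative_pedal} with Proposition~\ref{prop:negative_pedal_inverse}), applies $\mathcal{R}$ and its involutivity to get $\mathcal{R}(\gamma_D)=\mathcal{I}(\mathcal{E}_D)$, and then identifies $\mathcal{R}(\gamma_D)$ with the circumcircle $\mathcal{C}_p$ of the polar triangle --- this is precisely your ``alternative'' closing paragraph, so that part of your proposal reproduces the paper's proof. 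Your primary argument instead works pointwise: the classical identity that the pole of a line $\ell$ not through $D$ is the inverse of the foot of the perpendicular from $D$ to $\ell$ (distances $d$ and $r^2/d$ along the same ray) sends $A_p,B_p,C_p$ to the pedal feet $F_A,F_B,F_C$, and inversion maps $\mathcal{C}_p$ to the circle through these three points, namely $\mathcal{E}_D$. This is more elementary and self-contained, bypassing negative-pedals and dual curves altogether; it also exhibits the explicit vertex-to-foot correspondence, which the curve-level duality argument does not, and your key identity is in fact the unproved first step of the paper's later Proposition~\ref{prop:pedal_negative-pedal} and its corollary on poles of the sides, so your lemma would make those statements self-contained as well. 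One small refinement: your justification that ``$D$ lies on neither circle since $D$ is off the sides'' is not quite the right reason --- being off the sides only guarantees the feet differ from $D$. What you should say is that the feet $F_A,F_B,F_C$ are non-collinear because $D$ is not on the circumcircle (otherwise they would lie on a Simson line), so $\mathcal{E}_D$ is a genuine circle; and $D\notin\mathcal{C}_p$ follows a posteriori, since if $D$ were on $\mathcal{C}_p$ its inverse would be a line through the three non-collinear feet, a contradiction. With that patch your argument is complete. The trade-off: the paper's duality proof is shorter given the machinery already installed and fits its poristic theme; yours is independent of that machinery and more informative about the point-level geometry.
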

 
 \begin{proof}
 Refer to figure~\ref{fig:pedal-conic}.
 The i-conic $\gamma_D$ inscribed in $\triangle{ABC}$ is the negative-pedal of  
 pedal-circle of $\triangle{ABC};$
 as such, $\gamma_D$ is the dual of the inverse of 
 $\mathcal{E}_D:$
 $$\gamma_D=\mathcal{R}\big[ \mathcal{I}(\mathcal{E}_D)\big];$$
 performing the dual and using the fact that  the duality is an involution, we get
$$\mathcal{R}(\gamma_D)=\mathcal{I}(\mathcal{E}_D).$$ 
 The dual of $\gamma_D$, (a conic focused in $D$), w.r. to $\mathcal{I}$, (an inversion circle centred in $D$), is a circle.
 This circle is the  loci of the poles of the tangents at $\gamma_D.$ Further, since $\gamma_D$ is, by construction, the i-conic of $\triangle{ABC},$
  the dual of $\gamma_D$ also is the circumcircle of the polar triangle $\triangle{A_p B_p C_p},$ which is what we needed to proof.
  \end{proof}

  \begin{corollary}
  The lines  joining  the pedal-point $D$ with the vertices
  of the polar triangle $\mathcal{T}_p$, intercepts the sides of triangle $\mathcal{T}$ in points located on the pedal-circle.
  \end{corollary}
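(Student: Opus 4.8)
The plan is to read the statement directly off the pole--polar relationship, using only the defining property of the pedal triangle. Recall from the polar-porism (Proposition~\ref{lemma:poristic2052}) that each vertex of the polar triangle $\mathcal{T}_p$ is the pole, with respect to $\mathcal{I}$, of the corresponding side of $\mathcal{T}$; in particular $A_p$ is the pole of the line $BC$, so that, dually, the polar of $A_p$ is precisely $BC$. It therefore suffices to locate the intersection of the line $DA_p$ with $BC$ and to recognize it as a vertex of the pedal triangle.

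The key step is the elementary fact that, for an inversion circle centered at $D$, the polar of any point $P$ is perpendicular to the line $DP$ (and passes through the inverse of $P$). Applying this to $P=A_p$, whose polar is $BC$, we obtain $DA_p\perp BC$. Consequently the line $DA_p$ meets the side $BC$ at the foot of the perpendicular dropped from $D$ onto $BC$. By Definition~\ref{defi:pedal-circle} this foot is exactly one of the vertices of the pedal triangle of $\mathcal{T}$, and hence lies on the pedal-circle $\mathcal{E}_D$.

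The very same argument, applied to $B_p$ (the pole of $CA$) and to $C_p$ (the pole of $AB$), shows that $DB_p$ and $DC_p$ meet $CA$ and $AB$ at the two remaining feet of perpendiculars from $D$, which are again on $\mathcal{E}_D$; see figure~\ref{fig:pedal-conic}. This establishes the corollary. One may equivalently deduce it from the preceding proposition: since $\mathcal{E}_D$ is the inverse of $\mathcal{C}_p$ and $A_p\in\mathcal{C}_p$, the inverse point $\mathcal{I}(A_p)$ lies on $\mathcal{E}_D$; being the inverse of $A_p$ it lies on the line $DA_p$, and being on the polar of $A_p$ it lies on $BC$, so it is precisely the crossing point sought. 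I expect no real obstacle here: the entire content is the perpendicularity $DA_p\perp BC$, and the only point demanding a little care is to invoke the identification of the vertices of $\mathcal{T}_p$ as poles of the sides of $\mathcal{T}$, rather than re-deriving it.
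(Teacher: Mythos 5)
Your proof is correct, and its primary route is more direct and elementary than the paper's. The paper derives the corollary from its negative-pedal machinery: it recalls that the i-conics $\gamma_D$ and $\Gamma_D$ are the negative-pedals of the respective pedal-circles, uses the preceding proposition to identify $\mathcal{E}_D$ as the inverse of $\mathcal{C}_p$ (and $\mathcal{E}'_D$ as the inverse of $\mathcal{C}$), and then asserts --- without spelling out why --- that the perpendiculars from $D$ to the sides of $\triangle{ABC}$ pass through the vertices of $\triangle{A_p B_p C_p}$. You instead make that very assertion the entire proof: since $A_p$ is the pole of $BC$, and the polar of a point with respect to a circle centred at $D$ is perpendicular to the ray from $D$ through that point (passing through its inverse), you get $DA_p \perp BC$, so the line $DA_p$ meets $BC$ precisely at the foot of the perpendicular from $D$, which by Definition~\ref{defi:pedal-circle} is a vertex of the pedal triangle and hence lies on $\mathcal{E}_D$. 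This isolates the one genuinely needed fact (the pole--polar perpendicularity), requires no negative-pedal curves at all, and moreover identifies the intersection point concretely as $\mathcal{I}(A_p)$; your closing alternative argument --- via $\mathcal{E}_D=\mathcal{I}(\mathcal{C}_p)$ and $A_p\in\mathcal{C}_p$ --- is essentially the paper's actual route made precise. What the paper's version buys is continuity with the surrounding inversive/dual narrative, and it records in passing the symmetric statement for $\triangle{A_p B_p C_p}$ and the circle $\mathcal{C}$; what yours buys is a self-contained argument in which the key perpendicularity is proved rather than left implicit.
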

 \begin{proof} Refer to figure ~\ref{fig:pedal-conic}.
 The i-conics
$\gamma_D$   and $\Gamma_D$ 
inscribed in $\triangle{ABC}$ and  
$\triangle{A_p B_p C_p},$    respectively, are  the negative-pedals of their pedal-circles.
Since the negative-pedal is the reciprocal of the inverse,  their  pedal-circles
$\mathcal{E}_D$ and $\mathcal{E}'_D$ are, respectively, the   inverses of $\mathcal{C}_p$ and $\mathcal{C}$.
The perpendiculars from pedal-point $D$ to the sides of $\triangle{ABC}$ intercepts it  on points located on pedal-circle $\mathcal{E}_D$ and
pass through the vertices of 
$\triangle {A_p B_p C_p}.$ 
The same for $\triangle A_p B_p C_p.$
\label{lemma:pes-das-perpendiculares}
\end{proof}
 
\begin{corollary}
     The poles
of the sides of any triangle are located  on the inverse of its pedal-circle.
\end{corollary}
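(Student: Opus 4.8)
The plan is to read off this statement as an immediate corollary of the preceding Proposition, which identifies the pedal-circle as the inverse of the circumcircle of the polar triangle, together with the reciprocity of poles and polars. The one nontrivial ingredient is recognizing that ``the pole of a side'' is the same thing as ``the vertex of the polar triangle opposite that side.''

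First I would pin down the three poles in question. Keep $\mathcal{T}=\triangle{ABC}$ with pedal-point $D$, and take all poles and polars w.r.t.\ the inversion circle $\mathcal{I}$ centred at $D$. Let $A_p,B_p,C_p$ be the vertices of the polar triangle $\mathcal{T}_p$, so that, by the construction in Proposition~\ref{lemma:poristic2052}, the sides of $\mathcal{T}_p$ are the polars of $A,B,C$. Then $A_p$ is the intersection of the polars of $B$ and $C$. By La Hire's theorem (the reciprocity half of the fundamental theorem on pole--polars), since $A_p$ lies on the polar of $B$, the point $B$ lies on the polar of $A_p$; likewise $C$ lies on the polar of $A_p$. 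Hence the polar of $A_p$ is the line $BC$, i.e.\ $A_p$ is precisely the pole of the side $BC$. The same argument shows $B_p,C_p$ are the poles of $CA,AB$. So the three poles of the sides of $\mathcal{T}$ are exactly the vertices of $\mathcal{T}_p$.

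Next I would place these vertices on the relevant circle. By definition they lie on $\mathcal{C}_p$, the circumcircle of $\mathcal{T}_p$. The Proposition immediately preceding this corollary asserts $\mathcal{E}_D=\mathcal{I}(\mathcal{C}_p)$; since inversion in $\mathcal{I}$ is an involution, this is equivalent to $\mathcal{C}_p=\mathcal{I}(\mathcal{E}_D)$. Therefore the poles $A_p,B_p,C_p$ of the sides of $\mathcal{T}$ lie on $\mathcal{I}(\mathcal{E}_D)$, the inverse of the pedal-circle, which is the claim.

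There is essentially no obstacle here: the entire content is the identification in the first paragraph, after which the result is a one-line consequence of the inversion identity $\mathcal{C}_p=\mathcal{I}(\mathcal{E}_D)$ already established. If anything needs a moment's care it is only the bookkeeping that the pole of $BC$ is the vertex $A_p$ \emph{opposite} $A$ in $\mathcal{T}_p$, rather than some other vertex; but this follows at once from the reciprocity argument above.
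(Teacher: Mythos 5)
Your proof is correct and matches the paper's (implicit) argument: the corollary is stated without proof precisely because, as you show, the poles of the sides are the vertices $A_p, B_p, C_p$ of the polar triangle, which lie on $\mathcal{C}_p$, identified as the inverse of the pedal-circle $\mathcal{E}_D$ by the immediately preceding proposition. Your La Hire reciprocity step making explicit that the pole of $BC$ is the vertex of $\mathcal{T}_p$ opposite $A$ is exactly the bookkeeping the paper leaves to the reader.
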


We are almost ready to give 
a recipe for the construction of all triangles sharing the circumcircle and pedal-circle, using polar triangles.

We only  have to pay attention to a phenomenon which may occur, whenever the pedal point $D$ lies outside 
$\mathcal{T}.$

Refer to figure ~\ref{fig:2052_2}.
\begin{lemma}(the lemma of the infertile arcs)
Let $\bf{\mathcal{T}}$ a triangle,
$\mathcal{C}$ its circumcircle, 
$D$ a pedal point neither on its sides,
nor on its circumcircle 
and $\mathcal{I}$ an inversion circle centred in $D$.    
    Let $\gamma_D$ the i-conic of $\bf{\mathcal{T}}$ focused in $D.$ 
    There are two cases.
    
    i) If $D$ is inside $\mathcal{T}$, then
    $\gamma_D$ is inside the circumcircle and tangents internally the sides of triangle $\mathcal{T}.$ In this case, 
    for all $A$ in $\mathcal{C}$ the polar of $A$ intercepts
    $\mathcal {T}_p.$
    
    ii) If $D$ is outside $\triangle{T}$, then 
    the i-conic $\gamma_D$ tangents externally $\mathcal{T}$ and intercepts  $\mathcal{C}$ in two distinct points $D_1$ and $D_2;$ let $D_1D_2$ the arc of $\mathcal{C}$
    that contain no vertex of $\mathcal{T};$
    then a polar of a point $A$ in $\mathcal{C}$ intercepts 
    $\mathcal{C}_p$ if and only if point $D$ 
    is located outside $\mathcal{T}.$
    \label{lemma:infertil_polar_arcs}    
\end{lemma}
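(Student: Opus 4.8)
The plan is to extract a single tangent-counting principle from the polar duality of Proposition~\ref{lemma:poristic2052} and then read off both cases from the shape of $\gamma_D$. Recall from that proposition that $\mathcal{C}_p$ is the polar dual $\mathcal{R}(\gamma_D)$ of the i-conic, that is, the locus of the poles (w.r.\ to $\mathcal{I}$) of the tangents to $\gamma_D$. Fix $A\in\mathcal{C}$ and write $a$ for its polar. Every $Q\in\mathcal{C}_p$ is the pole of a unique tangent $t_Q$ of $\gamma_D$, and $t_Q$ is exactly the polar of $Q$; hence, by the reciprocity of poles and polars, $Q\in a$ if and only if $A\in t_Q$. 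The points of $\mathcal{C}_p$ lying on $a$ therefore correspond bijectively to the tangents of $\gamma_D$ through $A$, so the polar of $A$ meets $\mathcal{C}_p$ in two points $B_p,C_p$ precisely when two tangents to $\gamma_D$ issue from $A$, and misses $\mathcal{C}_p$ precisely when no tangent to $\gamma_D$ passes through $A$. Thus the entire lemma reduces to locating $\mathcal{C}$ relative to the two-tangent (exterior) region of the conic $\gamma_D$.

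It remains to determine that region in each case, which in turn rests on the type and position of $\gamma_D$. If $D$ lies inside $\mathcal{T}$, then $\gamma_D$ is an inscribed ellipse: by the classical theory of inscribed conics with a prescribed focus its second focus is the isogonal conjugate of $D$, again interior, so the conic is an ellipse contained in $\mathcal{T}$ and hence strictly inside the circumcircle $\mathcal{C}$. Consequently every $A\in\mathcal{C}$ lies in the exterior of $\gamma_D$, two tangents spring from each such $A$, and by the principle above the polar of every $A$ cuts $\mathcal{C}_p$. This settles case (i).

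For case (ii) the same principle is applied to a hyperbola, for which the two-tangent region is no longer the unbounded outside. By Propositions~\ref{proposition:Euler_circle_as_a_negative_pedal} and~\ref{prop:circulo_pedal_negativo}, $\gamma_D$ is the negative-pedal of the pedal-circle and is a hyperbola when $D$ lies outside that circle, the generic situation when $D$ is exterior to $\mathcal{T}$; its three tangent side-lines place the vertices of $\mathcal{T}$ in the component between the two branches, since the two sides through a vertex are the two tangents drawn from it. The plan is to show that this between-branches component is exactly the two-tangent locus, while each branch-interior (one of which contains the focus $D$) is a no-tangent locus. Granting this, the bounded circle $\mathcal{C}$, whose vertices sit in the between-branches region, crosses the unbounded $\gamma_D$ in the two points $D_1,D_2$ and is split into one arc inside a branch and one arc between the branches. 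On the infertile arc $D_1D_2$ (inside a branch) no tangent to $\gamma_D$ exists, so the polars miss $\mathcal{C}_p$; on the complementary fertile arc two tangents exist, so the polars meet $\mathcal{C}_p$ in $B_p,C_p$; this is the asserted equivalence.

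The main obstacle is precisely the hyperbola region-analysis of case (ii): proving that the between-branches component coincides with the two-real-tangent locus (and the branch-interiors with the no-tangent locus), and that $\mathcal{C}$ meets $\gamma_D$ in exactly the two boundary points $D_1,D_2$ with one arc in each component. I would pin this down through the pair-of-tangents conic $(A^{\top}Mx)^2=(A^{\top}MA)(x^{\top}Mx)$ attached to $\gamma_D=\{x^{\top}Mx=0\}$, whose two lines are real exactly when $A^{\top}MA$ carries the sign characterising the between-branches region; the lone degenerate point is the centre, where the two tangents fuse into the asymptotes, and it lies off $\mathcal{C}$. Once this is in place, the duality bijection of the first paragraph converts everything back to statements about $\mathcal{C}_p$ and the proof closes; the ellipse case and the reciprocity step are routine by comparison.
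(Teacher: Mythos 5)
Your proposal is correct, and it runs on the same engine as the paper's own proof --- the polar duality between $\gamma_D$ and $\mathcal{C}_p$ --- but the execution is genuinely different in where the work is placed. The paper's argument is three lines: polars of points \emph{on} $\gamma_D$ are the tangents of $\mathcal{C}_p$, and then ``by a continuity argument'' polars of points interior to the conic are external to $\mathcal{C}_p$, so the arc $D_1D_2$ is infertile; the region analysis of the conic is never made explicit. You instead make the La Hire reciprocity precise ($Q\in a$ iff $A\in t_Q$), obtaining a bijection between the points of $\mathcal{C}_p$ on the polar of $A$ and the real tangents of $\gamma_D$ through $A$, which reduces the whole lemma to the classical two-tangent region classification: exterior of an inellipse in case (i) (your isogonal-second-focus argument for why $D$ inside $\mathcal{T}$ gives an inellipse, hence a conic inside $\mathcal{C}$, is sound), and between-branches versus focal regions for the hyperbola in case (ii), for which the sign of $A^{\top}MA$ in the pair-of-tangents conic is the standard and correct criterion. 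What your route buys is rigor exactly where the paper waves at continuity; what it leaves open is honestly flagged but real: the hyperbola region classification is only sketched (``I would pin this down''), the assertion that the center of $\gamma_D$ avoids $\mathcal{C}$ is unverified (harmless, since the polar of the center is the line at infinity and such an $A$ would simply be infertile), and the claim that $\mathcal{C}$ meets $\gamma_D$ in \emph{exactly} two points --- needed for a single infertile arc, and asserted in the statement --- does not follow from the pair-of-tangents computation and would require a separate argument; to be fair, the paper's proof assumes this without comment as well. You should also firm up the phrase ``the generic situation'' in case (ii): $D$ outside $\mathcal{T}$ always yields a hyperbola (equivalently, $D$ lies outside its pedal circle), again via the isogonal conjugate second focus, so no genericity hypothesis is needed.
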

\begin{proof}
The proof uses the fact that 
$\mathcal{C}_p,$ the circumcircle of the polar triangle is  the dual of $\gamma_D$. Thus, the polar of points located on on $\gamma_D$ are the tangents to $\mathcal{C}_p,$ while, by a continuity argument, the polar of points located into interior of $\Gamma$ 
\footnote{by internal points of a conic we mean points located into the same (conex) component of the plane as those containing one of its foci}
are external to $\mathcal{C}_p.$ Therefore
no polar of points that lie on the arc $D_1D_2$ of $\mathcal{C}$ intercepts $\mathcal{C}_p.$
\end{proof}

Finally, a recipe.
Refer to figure ~\ref{fig:2052_2}.
\begin{alg} 
Let $\mathcal{C}$ be the circumcircle and $\mathcal{E}_D$ be
the  pedal-circle of a triangle $\mathcal{T}$ and  let $\mathcal{E}_D'$  the inverse of $\mathcal{E}_D$, w.r. to inversion circle $\mathcal{I}$ centred in $D.$

Let $A$ be any point on $\mathcal{C}$  and let $a$ be 
its polar; if $a$ is external to $\mathcal{E}_D'$, then there is no triangle with vertex in $A$ sharing the same  circumcircle and pedal-circle with $\mathcal{T}$. Otherwise, let the polar of $A$ 
intercept   $\mathcal{E}_D'$  in $B_p,C_p;$  the polars
of $B_p$ and $C_p$ intercepts (again) $\mathcal{C}$ in  $C$ and $A$ and $B,$ respectively. Then, the
pedal-circle of $\triangle {ABC}$ thus construct is  $\mathcal{E}_D$.
All triangles that are inscribed in $\mathcal{C}$ and which shares the same pedal-circle are obtainable in this manner.
\end{alg}
\begin{remark} As a mater a fact, 
 when perform this construction, we obtain (concomitantly) two  systems of  triangles, one, poristic 
 w.r. to $\big(\mathcal{C},\gamma_D\big),$  and the other one,  poristic w.r. to  $\big(\mathcal{C}_p,\Gamma_D\big)$ and both respectively sharing 
the same circumcircle and  pedal-circle. Since we are interested in the former, the later was discarded.
\end{remark}

\section{A negative-pedal porism}

\begin{definition} (the negative-pedal triangle and the
negative-pedal circle)
Let $\mathcal{T}$ a triangle, $\mathcal{C}$ its circumcircle
and $D$ a pedal point neither  on its sides, nor on its circumcircle.
The negative-pedal triangle (or negative-pedal triangle, or the negative-pedal triangle)
 of  $\mathcal{T}$  w.r. to pedal point $D$ 
 is a triangle $\mathcal{T}',$
whose sides  $a',b'$ and $c'$ are the perpendiculars through the vertices of $\mathcal{T}$ to lines
that join the pedal-point $D$ to the vertices of $\mathcal{T}$.

The  circumcircle of the negative-pedal triangle, denoted by $\mathcal{C}_D$, is   the negative-pedal (or negative-pedal or negative-pedal) circle. 
\label{negative-pedal}
\end{definition}

Many classic circles (or triangles) may be looked upon as    negative-pedal circles (or triangles)  w.r. to pedal points that are notorious centres of the reference triangle.
The negative-pedal triangle the circumcenter  is the tangential triangle. The negative-pedal triangle of the orthocenter is the anti-complementary triangle.
Negative-pedal triangle is not defined
for the points that belong to the sides of the given triangle.

Negative-pedal circle, pedal-circle and circumcircle are closely related.

\begin{figure}
\centering
\includegraphics[trim=400 0 50 0,clip,width=1.0\textwidth]{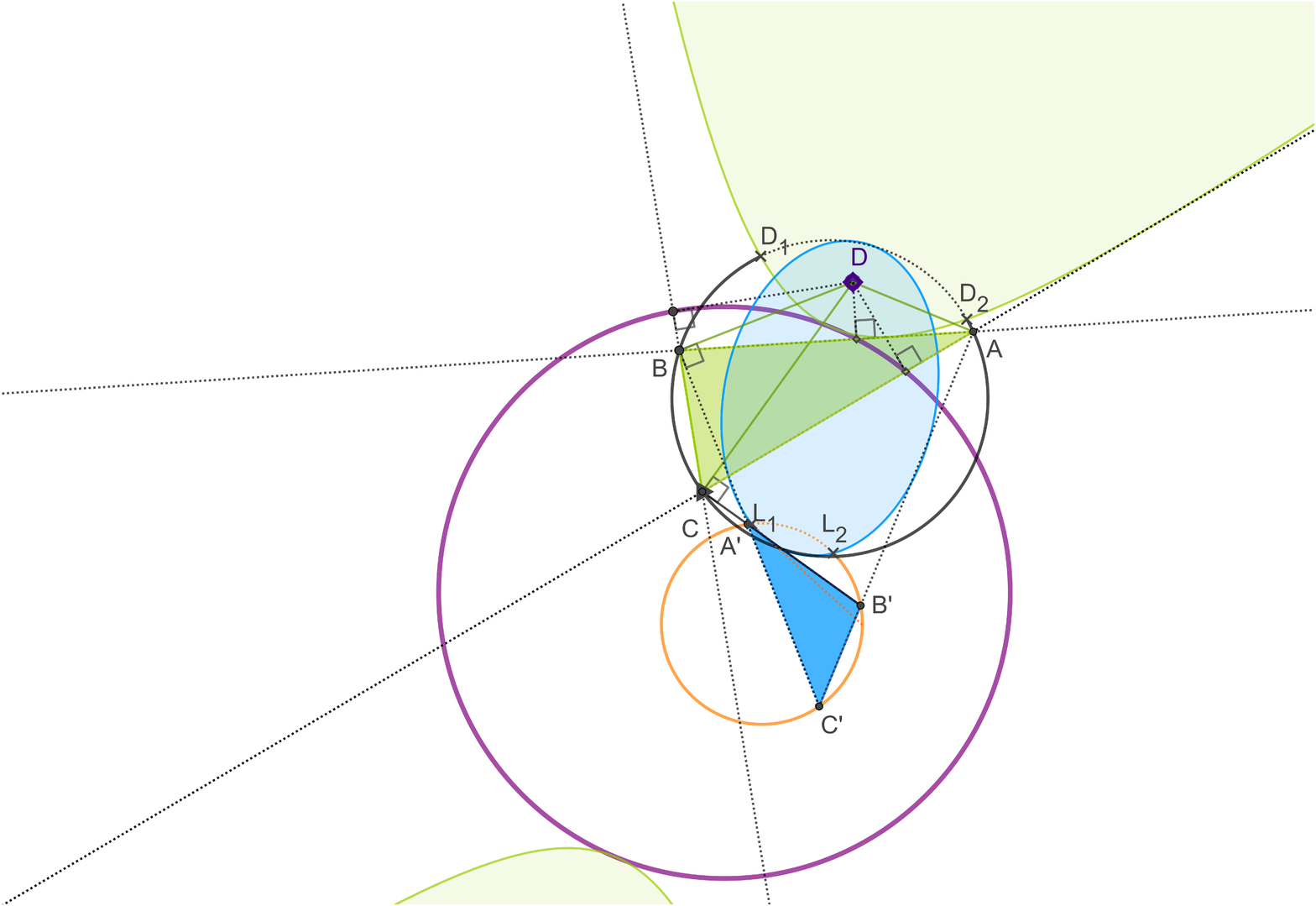}
\caption{The projections of pedal-point $D$ on the sides of $\triangle ABC$ are on pedal-circle 
$\mathcal{E}_D$ (purple) iff lines $AB,$ $AC,$ $BC$   tangent $\gamma_D$ the negative-pedal of circumcircle $\mathcal{C}$ (solid green hyperbola).
The vertices $A,B,C$ lie outside the hyperbola, hence  the  arc $D_1D_2$ 
situated inside the i-conic $\gamma_D$ is infertile.
$\triangle{ABC}$ preserves pedal-circle  iff 
the sides of pedal $\triangle{A' B' C'}$ (solid blue) tangent the negative-pedal of the circumcircle $\mathcal{C}$, the in-ellipse $\Gamma_D$ (blue).  $\mathcal{E}_D$ is fixed iff $\mathcal{C_D}$, the circumcircle of negative-pedal triangle $\triangle{A'B'C'}$ is fixed.
}
\label{fig:2049}
\end{figure}

Refer to figure ~\ref{fig:2049}
\begin{proposition}
The pedal-circle of $\mathcal{T}'$, the  negative-pedal triangle of $\mathcal{T}$ is  $\mathcal{C},$ the circumcircle of $\mathcal{T}$. 
\label{lemma:euler_pedal_circumcircle}
\end{proposition}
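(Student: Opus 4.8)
The plan is to read off the feet of the perpendiculars from $D$ to the sides of $\mathcal{T}'$ directly from the defining construction of the negative-pedal triangle and to recognize that these feet are nothing but the vertices of $\mathcal{T}$ itself. First I would fix notation: write $A,B,C$ for the vertices of $\mathcal{T}$ and let $a',b',c'$ denote the sides of the negative-pedal triangle $\mathcal{T}'$, where, following Definition~\ref{negative-pedal}, $a'$ is the line through $A$ perpendicular to $DA$, $b'$ the line through $B$ perpendicular to $DB$, and $c'$ the line through $C$ perpendicular to $DC$. These lines are well defined because $D$, lying on no side of $\mathcal{T}$, differs from each vertex.

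The heart of the argument is a single observation. Since $a'$ passes through $A$ and is orthogonal to the line $DA$, the orthogonal projection of $D$ onto $a'$ is precisely $A$: the line $DA$ already realizes the perpendicular dropped from $D$, and it meets $a'$ at $A$. The same reasoning gives that the foot of the perpendicular from $D$ to $b'$ is $B$ and to $c'$ is $C$. Hence the three feet of the perpendiculars from $D$ to the sides of $\mathcal{T}'$ are exactly $A,B,C$; in other words, the pedal triangle of $D$ with respect to $\mathcal{T}'$ coincides with $\mathcal{T}$.

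By Definition~\ref{defi:pedal-circle}, the pedal-circle of $\mathcal{T}'$ is the circumcircle of this pedal triangle, that is, the circle through $A,B,C$, which is $\mathcal{C}$. This would complete the proof. Conceptually, the statement records that forming the negative-pedal triangle and forming the pedal triangle are mutually inverse operations relative to the fixed point $D$: projecting $D$ onto the sides of $\mathcal{T}'$ undoes the perpendicular-through-vertex construction and returns the vertices of $\mathcal{T}$.

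I expect essentially no obstacle here beyond correctly matching each side of $\mathcal{T}'$ with the vertex it passes through; the only point demanding a word of care is the verification that the foot of the perpendicular from $D$ to each side $a'$ is the vertex $A$ rather than some other point, which is immediate from $a'\perp DA$ together with $A\in a'$. (Genericity — that $a',b',c'$ actually bound a triangle — is already presupposed in calling $\mathcal{T}'$ a triangle, so it need not be addressed.)
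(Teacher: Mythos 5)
Your proof is correct: the observation that each side of $\mathcal{T}'$ passes through a vertex of $\mathcal{T}$ and is perpendicular to the line joining $D$ to that vertex, so that the feet of the perpendiculars from $D$ are exactly $A,B,C$, is precisely the ``straightforward'' argument the paper omits (it states the fact and writes only that the proof is omitted). Your write-up, including the remark that the pedal and negative-pedal constructions are mutually inverse relative to $D$, supplies the intended reasoning with no gaps.
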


The proof of this fact is straightforward and we omit it.
   The results above update Proposition
~\ref{prop:poristica}.

  While 
the poristic tie between  circumcircle and 
pedal-circle requires the mediation of an  i-conic, there exists a straightforward poristic bound between the negative-pedal circle and circumcircle.

\begin{figure}
\centering
\includegraphics[trim=300 0 300 0,clip,width=1.0\textwidth]{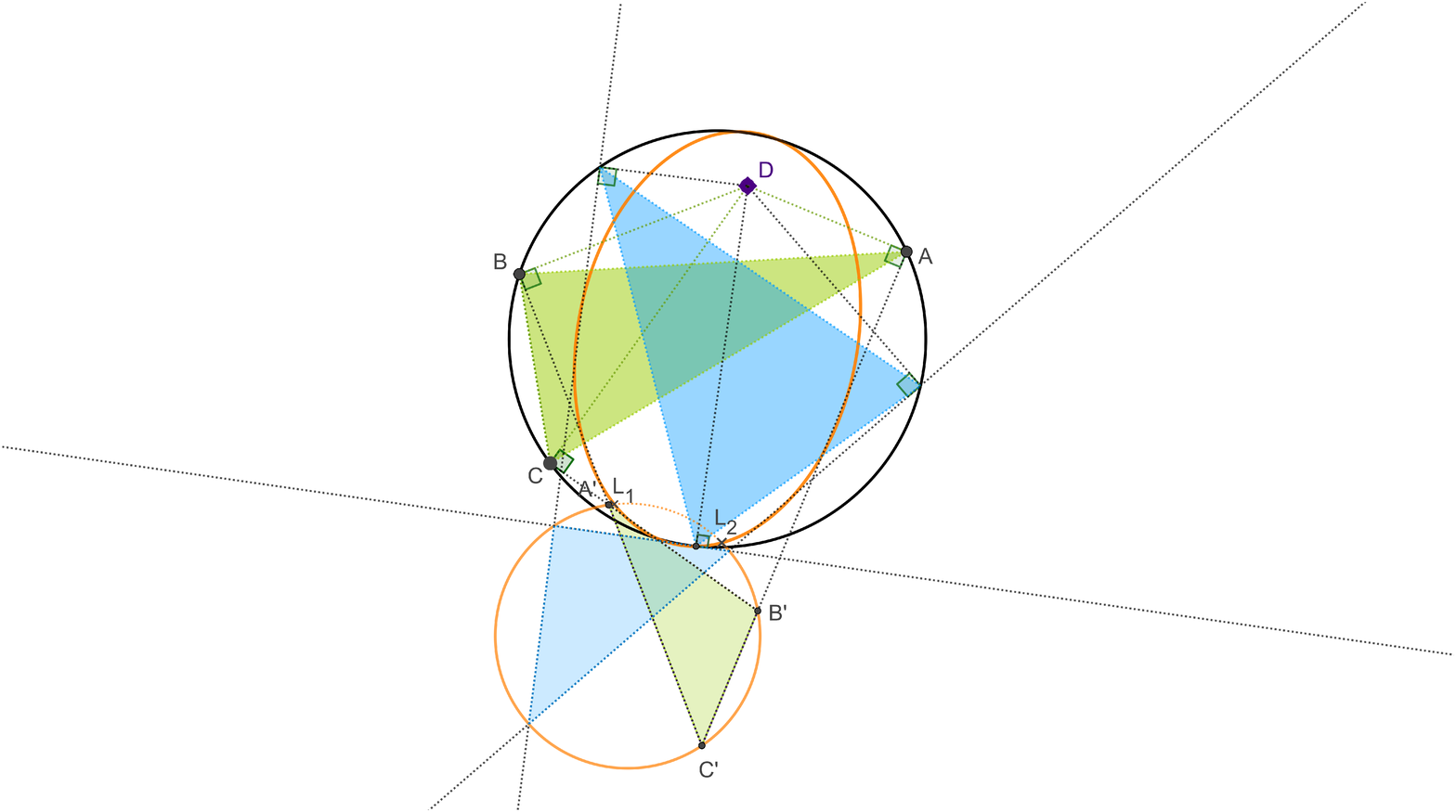}
\caption{The negative-pedal triangle 
 of $\triangle{ABC}$ w.r. to pedal-point $D$ is  
 $\triangle{A' B' C'},$
whose sides are the  perpendiculars in $A$, $B$, and  $C$ to $DA,DB$ and $DC$ respectively. Its circumcircle,  $\mathcal{C}_D$  (orange circle) is the  negative-pedal circle of $\triangle{ABC}$
w.r. to $D$. 
 $\Gamma_D,$ the i-conic of $\triangle{A' B' C'}$ focused in $D$ (orange elipse) is the negative-pedal of  circumcircle $\mathcal{C}$.
 $\big(\mathcal{C}_D,{\Gamma}_D\big)$ form a poristic pair for $n=3.$ By Poncelet's porism, 
two triangles inscribed in $\mathcal{C}$ 
(blue and green) share the same negative-pedal circle $\mathcal{C}_D$
if and only if the sides of their negative-pedal triangles tangents  the same conic  ${\Gamma}_D$. The arc $L_1 L_2$ of $\mathcal{C}_D$ located inside the in-ellipse, cannot contain  vertices
of $\triangle{A' B' C'}$ (there is no  tangent from 
on $L_1 L_2$ to $\Gamma_D$).
Circle
$\mathcal{C}$ is pedal-circle of $\triangle{A'B'C'}$ w.r. to pedal-point $D.$
}
\label{fig:2043}
\end{figure}

\begin{figure}
\centering
\includegraphics[trim=300 0 400 0,clip,width=1.0\textwidth]{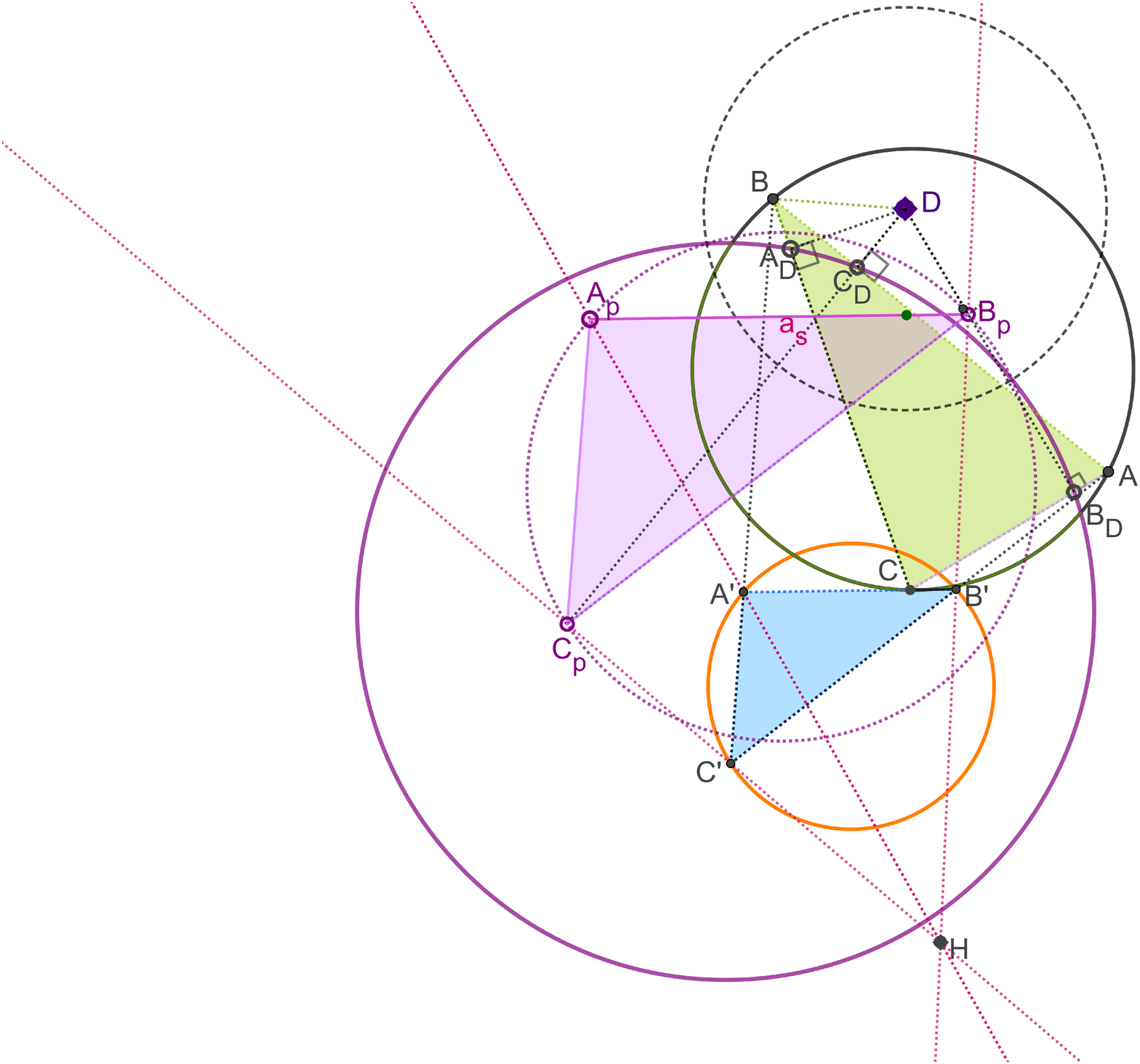}
\caption{$A_p$, $B_p$, $C_p$ are, respectively, the poles of $BC$, $AC$, and $AB$ w.r. to  $\mathcal{I}$ 
(doted black).$\mathcal{E}'_D$ (doted purple), 
the circumcircle of $\triangle{A_p B_p C_p}$
is the inverse of the pedal-circle $\mathcal{E}_D$ (purple). The negative-pedal 
$\triangle{A' B' C'}$ (blue) and $\triangle A_p B_p C_p$  are homothetic and so are their circumcircles
$\mathcal{C}_D$ and $\mathcal{E}'_D.$
Therefore
$\mathcal{C}_D$ is fixed iff $\mathcal{E}'_D$ is fixed iff 
$\mathcal{E}_D$ is fixed.
}
\label{fig:2047}
\end{figure}

\begin{proposition}
 (A negative-pedal porism).
Let $\mathcal{T}$ a triangle, $\mathcal{T}_D$ its negative-pedal triangle and $\mathcal{C}$, $\mathcal{C}_D$, their circumcircles. 

Then $\big(\mathcal{C}, \mathcal{C}_D\big)$ form a 
negative-pedal poristic pair of circles, in the following sense:
 from any point  $A$  on  $\mathcal{C}$ let
the perpendicular through $A$ to $DA$
intercept $\mathcal{C}_D$ in two distinct points $B'$ and $C'$.
The circles of diameters $[B'D]$ and $[C'D]$ intercept (again) circle $\mathcal{C}$ in $C$ and $B$, respectively.
Finally, let $A'$ the intersection of $BC'$ and $CB'$.
Then:

i) $A'$ is on circle $\mathcal{C}_D$;

ii) $\triangle{A' B' C'}$ share with $\mathcal{T'}$ the same circumcircle and pedal-circle $\mathcal{C}.$

If the perpendicular through $A$ to $DA$ does not 
intercept $\mathcal{C}_D$ in two distinct points, there is no such triangle with a vertex in $A.$ 
This can  only occur iff $A$ is located on an infertile arc $D_1D_2$ of $\mathcal{C}$ located   inside $\gamma_D$, the i-conic of $\triangle{ABC}.$
(see figure ~\ref{fig:2049}).
\label{proposition:negative_pedal_porism}
\end{proposition}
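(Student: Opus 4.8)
The plan is to reduce the whole statement to the pedal porism (Proposition~\ref{prop:poristica}) applied not to $\mathcal{T}$ but to its negative-pedal triangle $\mathcal{T}'$, using that pedal and negative-pedal are reciprocal operations. First I would record the duality that drives everything: by Definition~\ref{negative-pedal} the side of $\mathcal{T}'$ through a vertex of $\mathcal{T}$ is perpendicular to the join of that vertex with $D$, so the foot of the perpendicular from $D$ onto each side of $\mathcal{T}'$ is the corresponding vertex of $\mathcal{T}$; hence $\mathcal{T}$ is the pedal triangle of $\mathcal{T}'$. Proposition~\ref{lemma:euler_pedal_circumcircle} then gives that the pedal-circle of $\mathcal{T}'$ is $\mathcal{C}$, and Proposition~\ref{proposition:Euler_circle_as_a_negative_pedal} identifies the i-conic $\Gamma_D$ of $\mathcal{T}'$ focused at $D$ with the negative-pedal $\mathcal{N}(\mathcal{C})$. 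Since $\mathcal{T}'$ is inscribed in $\mathcal{C}_D$ and circumscribed to $\Gamma_D$, Poncelet's porism makes $(\mathcal{C}_D,\Gamma_D)$ a poristic pair for $n=3$; this is the pair on which the construction is run.

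Next I would reinterpret the construction as the Poncelet closure for $(\mathcal{C}_D,\Gamma_D)$. The line through $A$ perpendicular to $DA$ has $A$ as the foot of the perpendicular from $D$; as $A\in\mathcal{C}$ and $\Gamma_D=\mathcal{N}(\mathcal{C})$, Definition~\ref{defi:negative_pedal} shows it is tangent to $\Gamma_D$, and by hypothesis it meets $\mathcal{C}_D$ in $B'$ and $C'$. The key is a Thales bookkeeping: since $A,B',C'$ are collinear on a line perpendicular to $DA$, the angles $\angle DAB'$ and $\angle DAC'$ are right, so $A$ lies on both circles of diameter $[B'D]$ and $[C'D]$; their second meetings with $\mathcal{C}$ are exactly the points $C$ and $B$ of the statement. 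The same right-angle characterization gives $\angle DCB'=\angle DBC'=90^\circ$, i.e. the lines $CB'$ and $BC'$ are the perpendiculars at $C$ and $B$ to $DC$ and $DB$; as $C,B\in\mathcal{C}$, both are again tangent to $\Gamma_D$. Thus the two tangents to $\Gamma_D$ from $B'$ are $B'C'$ and $CB'$, and from $C'$ they are $B'C'$ and $BC'$.

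With this, claim (i) is immediate: $B',C'\in\mathcal{C}_D$, the chord $B'C'$ is tangent to $\Gamma_D$, and $A'$ is the intersection of the other tangents from $B'$ and $C'$; Poncelet closure for the poristic pair $(\mathcal{C}_D,\Gamma_D)$ forces this third vertex onto $\mathcal{C}_D$. For (ii), $\triangle A'B'C'$ is now inscribed in $\mathcal{C}_D$ with all three sides tangent to $\Gamma_D$, so Proposition~\ref{prop:poristica} applied to $\mathcal{T}'$ yields that it shares the pedal-circle $\mathcal{C}$ of $\mathcal{T}'$. Finally the feet of the perpendiculars from $D$ to its three sides are precisely $A,B,C$ (the sides are $B'C'$, $BC'$, $CB'$ with feet $A$, $B$, $C$), so $\triangle A'B'C'$ is the negative-pedal triangle of $\triangle ABC$; hence $\triangle ABC$, inscribed in $\mathcal{C}$, has negative-pedal circle $\mathcal{C}_D$. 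This is exactly the asserted porism: each admissible $A$ delivers a triangle with circumcircle $\mathcal{C}$ and the one fixed negative-pedal circle $\mathcal{C}_D$.

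The hard part will be the infertile-arc clause. The construction succeeds iff the perpendicular through $A$ is a secant of $\mathcal{C}_D$, and this perpendicular is always tangent to the inscribed conic $\Gamma_D=\mathcal{N}(\mathcal{C})$; so the question is which tangents of $\Gamma_D$ are chords of $\mathcal{C}_D$. By Proposition~\ref{prop:circulo_pedal_negativo}, when $D$ is interior to $\mathcal{C}$ the conic $\Gamma_D$ is an ellipse inside the disc bounded by $\mathcal{C}_D$, so every tangent touches it at an interior point and is automatically a secant, and there is no infertile arc; when $D$ is exterior to $\mathcal{C}$ (which for $D=H$ is precisely the obtuse case) $\Gamma_D$ is a hyperbola, and only the tangents whose foot $A$ lies on the sub-arc of $\mathcal{C}$ interior to $\Gamma_D$, cut off by the two points $D_1,D_2=\mathcal{C}\cap\Gamma_D$, fail to meet $\mathcal{C}_D$. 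The delicate point is to identify precisely which of the two arcs is infertile and to handle the boundary positions $A=D_1,D_2$, where the perpendicular degenerates into a tangent to $\mathcal{C}_D$; I would settle this by the continuity argument already used in Lemma~\ref{lemma:infertil_polar_arcs}, transported to $\mathcal{C}_D$ through the pedal/negative-pedal reciprocity, and then reconcile this conic with the i-conic named in the statement and drawn in Figure~\ref{fig:2049}.
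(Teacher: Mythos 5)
Your proposal is correct and takes essentially the same route as the paper: the paper's proof also reduces the construction to Poncelet's porism for the pair $\big(\mathcal{C}_D,\Gamma_D\big)$, where $\Gamma_D=\mathcal{N}(\mathcal{C})$ is the i-conic of $\mathcal{T}'$, taking $A$ as initial point, and explicitly ``leaves the details to the reader.'' Your Thales bookkeeping, the closure argument placing $A'$ on $\mathcal{C}_D$, and the appeal to the pedal porism for part (ii) are precisely those omitted details (and your caution about the infertile-arc clause concerns a point the paper's proof does not address at all).
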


    \begin{proof}
    Refer to figure ~\ref{fig:2043}.
    Let $A'$  the intersection of $C' B$ with the circle $\mathcal{C}_D.$ Then 
    a (classic) poristic pair is 
    $(\mathcal{C}_D, \Gamma_D)$
 formed by the negative-pedal circle $\mathcal{C_D}$ 
  and the negative-pedal of the circumcircle $\mathcal{C}$, the in-ellipse $\Gamma_D$.
The construction performed  above is equivalent with Poncelet porism for this pair of conics
$(\mathcal{C}_D, \Gamma_D)$
taking as an initial point $A.$  We left the details to the reader.
\end{proof}
\begin{corollary} Two triangles inscribed into the same circle, 
  share the negative-pedal circle if and only if (they) share the same pedal-circle. 
\end{corollary}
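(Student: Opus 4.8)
The plan is to deduce the equivalence from a single, triangle-independent bijection between the pedal-circle and the negative-pedal circle of any triangle inscribed in $\mathcal{C}$. Fix the circumcircle $\mathcal{C}$ (centre $O$, radius $R$) and the inversion circle $\mathcal{I}$ (centre $D$, radius $r$). For a triangle $\mathcal{T}=\triangle{ABC}$ inscribed in $\mathcal{C}$ I write $\mathcal{E}_D$ for its pedal-circle, $\mathcal{E}'_D=\mathcal{I}(\mathcal{E}_D)$ for the circumcircle of its polar triangle $\triangle{A_p B_p C_p}$, and $\mathcal{C}_D$ for the circumcircle of its negative-pedal triangle $\triangle{A'B'C'}$. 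By the Proposition identifying the pedal-circle with the inverse of the circumcircle of the polar triangle, the assignment $\mathcal{E}_D\mapsto\mathcal{E}'_D$ is the fixed inversion $\mathcal{I}$; so it suffices to produce a fixed map carrying $\mathcal{E}'_D$ to $\mathcal{C}_D$.

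First I would note that $\triangle{A_p B_p C_p}$ and $\triangle{A'B'C'}$ have pairwise parallel sides. The side of the polar triangle opposite $A_p$ is the line $B_p C_p$, which is the polar of $A$ and hence perpendicular to $DA$; the side of the negative-pedal triangle opposite $A'$ is, by Definition~\ref{negative-pedal}, the perpendicular to $DA$ through $A$. These are parallel, and the same holds for the $B$- and $C$-sides, so the two triangles are homothetic.

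The decisive step is to show that this homothety is the same for every inscribed triangle. Placing $D$ at the origin, the polar of a point $P$ is the line $P\cdot x=r^{2}$, so $A_p$ (the pole of $BC$) is characterised by $\mathbf{b}\cdot A_p=\mathbf{c}\cdot A_p=r^{2}$, while the vertex $A'=b'\cap c'$ satisfies $\mathbf{b}\cdot A'=|\mathbf{b}|^{2}$ and $\mathbf{c}\cdot A'=|\mathbf{c}|^{2}$, where $\mathbf{a},\mathbf{b},\mathbf{c}$ are the position vectors of $A,B,C$. Since each vertex lies on $\mathcal{C}$, the power-of-a-point identity gives $|\mathbf{p}|^{2}=2\,O\cdot\mathbf{p}+\rho$ for $\mathbf{p}\in\{\mathbf{a},\mathbf{b},\mathbf{c}\}$, with $\rho:=R^{2}-|O|^{2}$ equal to minus the power of $D$ with respect to $\mathcal{C}$ and therefore independent of the triangle. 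Substituting, the vector $A'-2O$ solves the system $\mathbf{b}\cdot x=\rho$, $\mathbf{c}\cdot x=\rho$, whereas $A_p$ solves $\mathbf{b}\cdot x=r^{2}$, $\mathbf{c}\cdot x=r^{2}$; as the right-hand sides are constant along the two rows, the solutions are proportional and one obtains the single affine relation $A'=(\rho/r^{2})\,A_p+2O$, valid verbatim for the remaining two vertices. Thus $\triangle{A'B'C'}=h(\triangle{A_p B_p C_p})$ for the \emph{fixed} homothety $h(x)=(\rho/r^{2})\,x+2O$, whose centre and ratio depend only on $\mathcal{C}$, $D$ and $\mathcal{I}$ (this is precisely the homothety recorded in Figure~\ref{fig:2047}).

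Combining the two fixed maps, the negative-pedal circle is recovered from the pedal-circle by $\mathcal{C}_D=h(\mathcal{I}(\mathcal{E}_D))$, a composition of a fixed inversion with a fixed homothety, hence a bijection on circles that does not depend on the inscribed triangle. Consequently two triangles inscribed in $\mathcal{C}$ have the same negative-pedal circle exactly when they have the same pedal-circle, which is the assertion; equivalently, by Lemma~\ref{lemma:pedal_negativo} and Proposition~\ref{prop:poristica}, both conditions amount to the triangles being circumscribed to one and the same i-conic $\gamma_D$. The only genuine obstacle is the triangle-independence of $h$ in the third step; once the power-of-a-point identity is inserted it collapses to the proportionality $A'-2O=(\rho/r^{2})A_p$, and everything else is formal.
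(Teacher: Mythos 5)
Your proof is correct, but it reaches the corollary by a different road than the paper. The paper derives this equivalence from the negative-pedal porism (Proposition~\ref{proposition:negative_pedal_porism}), i.e.\ from Poncelet's porism applied to the pair $(\mathcal{C}_D,\Gamma_D)$ with $\Gamma_D$ the negative-pedal of $\mathcal{C}$, together with the fact that $\mathcal{C}$ is the pedal-circle of the negative-pedal triangle; only later, in Proposition~\ref{prop:pedal_negative-pedal}, does it record the homothety between $\triangle{A_p B_p C_p}$ and $\triangle{A'B'C'}$ -- and there it merely observes perpendicularity of corresponding sides to $DA$, $DB$, $DC$ and asserts ``fixed iff fixed'' for the circumcircles. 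Your argument is essentially a sharpened version of that second route: by writing the polar vertex as the solution of $\mathbf{b}\cdot x=\mathbf{c}\cdot x=r^{2}$ and the negative-pedal vertex as the solution of $\mathbf{b}\cdot x=|\mathbf{b}|^{2}$, $\mathbf{c}\cdot x=|\mathbf{c}|^{2}$, and inserting the circle equation $|\mathbf{p}|^{2}=2\,O\cdot\mathbf{p}+\rho$, you obtain the \emph{explicit, triangle-independent} map $A'=(\rho/r^{2})A_p+2O$. This actually fills a genuine lacuna in the paper's homothety proof: knowing that each triangle is homothetic to its polar triangle does not by itself give ``$\mathcal{C}_D$ fixed iff $\mathcal{E}'_D$ fixed'' unless the homothety is the same for all inscribed triangles, which is exactly what your computation establishes (note $\rho\neq 0$ since $D\notin\mathcal{C}$, and $\mathbf{b},\mathbf{c}$ are independent since $D$ is not on the sides, so the linear systems are uniquely solvable; when $\rho=r^{2}$ your $h$ degenerates to a translation, but it remains a fixed bijection on circles, which is all the argument needs). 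What you lose relative to the paper's porism approach is the constructive by-product: Poncelet's porism simultaneously yields the recipe for drawing all triangles in the family and the analysis of the infertile arcs, whereas your fixed map $h\circ\mathcal{I}$ proves the equivalence of the two sharing conditions without producing the triangles themselves.
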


The negative-pedal porism above allow a simple construction of all
triangles sharing  circumcircle and  negative-pedal circle. Refer to figure \ref{fig:2043}.
\begin{alg} 
Let $\mathcal{T}$ a triangle, $\mathcal{T}_D$ its negative-pedal triangle and $\mathcal{C}$, $\mathcal{C}_D$, their circumcircles. 
Let $A$  a point in $\mathcal{C}.$ Let 
  $a'$ the perpendicular in $A$ to $DA$, and
 let $B',C'$  the intersection of $a'$
and  $\mathcal{C}'_D.$
The circles of diameters $[B'D]$ and $[C'D]$
intercept (again) the circumcircle $\mathcal{C}$ in $C$ and $B,$
respectively.
Then 
 
I i) the negative-pedal triangle  of $\triangle{ABC}$ is
 $\triangle{A' B' C'}$; equivalently,
$\triangle{A B C}$ is 
the pedal triangle  of $\triangle{A' B' C'};$
 
I ii) the circumcircle of $\triangle{A' B' C'}$ is 
$\mathcal{C}'_D;$ equivalently, the negative-pedal circle of $\triangle{ABC}$ is $\mathcal{C}'_D;$
the  pedal-circle  of $\triangle{A'B'C'}$
is  $\mathcal{C}$.

II)
If any of these intersections is empty,  the process stops. This happens if the initial point $A$ was located on an "infertile" arc $D_1D_2$ of $\mathcal{C}$ (see figure~\ref{fig:2049}).
\end{alg}

Finally, let us show how these two circles relate.

\begin{proposition} The negative-pedal circle and the inverse of the pedal-circle of a triangle are homothetic. Therefore,
two  triangles inscribed in $\mathcal{C}$ share the pedal-circle $\mathcal{E}_D$ if and only if they share the negative-pedal circle $\mathcal{C}_D$.
\label{prop:pedal_negative-pedal}
\end{proposition}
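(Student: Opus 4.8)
The plan is to realise both circles as circumcircles of two triangles that are \emph{manifestly} homothetic, and then to verify that the homothety relating them is one and the same map for every triangle inscribed in $\mathcal{C}$ --- which is exactly the extra information that upgrades a per-triangle homothety into the claimed equivalence. First I would fix the inversion circle $\mathcal{I}$ centred at $D$ (radius $r$) and recall, from the Proposition stating that the pedal-circle is the inverse of the circumcircle of the polar triangle, that $\mathcal{E}'_D := \mathcal{I}(\mathcal{E}_D) = \mathcal{C}_p$, the circumcircle of the polar triangle $\mathcal{T}_p$ (using that inversion is an involution). It therefore suffices to compare $\mathcal{C}_D$, the circumcircle of the negative-pedal triangle $\mathcal{T}'$, with $\mathcal{C}_p=\mathcal{E}'_D$.

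The key observation is that $\mathcal{T}'$ and $\mathcal{T}_p$ have pairwise parallel sides. Indeed, by Definition~\ref{negative-pedal} the side of $\mathcal{T}'$ opposite $A'$ is the perpendicular to $DA$ through the vertex $A$, whereas the corresponding side of $\mathcal{T}_p$ is the polar of $A$, i.e. the perpendicular to $DA$ through the inverse point $\mathcal{I}(A)$; both are orthogonal to $DA$, hence parallel, and similarly for the sides built on $B$ and $C$. Since two triangles with pairwise parallel sides are homothetic, $\mathcal{T}'$ and $\mathcal{T}_p$ are homothetic, and therefore so are their circumcircles $\mathcal{C}_D$ and $\mathcal{C}_p=\mathcal{E}'_D$. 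This already gives the first assertion.

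The hard and decisive part is to show that this homothety is independent of the triangle. Placing $D$ at the origin, the side of $\mathcal{T}'$ opposite $A$ is $\{x:\ x\cdot A=|A|^2\}$ and the corresponding side of $\mathcal{T}_p$ is the polar $\{x:\ x\cdot A=r^2\}$. A homothety $\phi(x)=kx+t$ carries the first line to $\{y:\ y\cdot A=k|A|^2+t\cdot A\}$, so I must solve $k|A|^2+t\cdot A=r^2$ simultaneously for $A,B,C$. Writing $S$ and $R$ for the centre and radius of $\mathcal{C}$, and using $|X|^2=2\,X\cdot S+(R^2-|S|^2)$ for every $X$ on $\mathcal{C}$, the three conditions collapse to $k|X|^2+t\cdot X=k(R^2-|S|^2)+(2kS+t)\cdot X$, which must equal the constant $r^2$ at three non-collinear points; hence $2kS+t=0$ and $k(R^2-|S|^2)=r^2$. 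As $D\notin\mathcal{C}$ forces $R^2-|S|^2\neq 0$, this has the unique solution $k=r^2/(R^2-|S|^2)$, $t=-2kS$. Crucially $k$ and $t$ depend only on $\mathcal{C}$, $D$ and $r$, so $\phi$ is a single fixed homothety sending $\mathcal{C}_D$ to $\mathcal{E}'_D$ for \emph{every} triangle inscribed in $\mathcal{C}$ with pedal point $D$.

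The equivalence then follows formally: the assignment $\mathcal{E}_D\mapsto\mathcal{I}(\mathcal{E}_D)=\mathcal{E}'_D\mapsto\phi^{-1}(\mathcal{E}'_D)=\mathcal{C}_D$ is the composite of the fixed involution $\mathcal{I}$ and the fixed homothety $\phi$, hence a bijection between the pedal-circles and the negative-pedal circles of triangles inscribed in $\mathcal{C}$. Consequently two such triangles share the pedal-circle iff they share the negative-pedal circle, recovering the Corollary following Proposition~\ref{proposition:negative_pedal_porism}. I expect the main obstacle to be exactly the triangle-independence of $\phi$ in the third step: the parallel-sides argument by itself only produces a homothety for each individual triangle, and it is the collapse of the three linear conditions --- forced by $A,B,C$ lying on the common circle $\mathcal{C}$ --- that makes the homothety canonical and thereby delivers the ``if and only if''.
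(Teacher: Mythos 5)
Your argument is correct, and its skeleton coincides with the paper's own proof: both rest on the observation that the side of the negative-pedal triangle built on the vertex $A$ and the polar of $A$ are each perpendicular to $DA$, so that $\mathcal{T}'$ and $\mathcal{T}_p$ have pairwise parallel sides and are therefore homothetic, and both then route the pedal-circle through the identity $\mathcal{E}'_D=\mathcal{I}(\mathcal{E}_D)=\mathcal{C}_p$. The genuine difference is your third step. The paper stops at the per-triangle homothety and asserts directly that $\mathcal{C}_D$ is fixed iff $\mathcal{E}'_D$ is fixed; as you correctly sense, that inference is incomplete as written, since a homothety whose center and ratio might vary from triangle to triangle would not transfer ``sharing'' one circle into ``sharing'' the other --- indeed any two circles in the plane are homothetic, so the per-triangle statement alone carries essentially no force. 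Your computation, collapsing $k\lvert X\rvert^2+t\cdot X=r^2$ at the three non-collinear points $A,B,C\in\mathcal{C}$ to $2kS+t=0$ and $k(R^2-\lvert S\rvert^2)=r^2$, shows that the map is the single affine map $\phi(x)=k(x-2S)$ with $k=r^2/(R^2-\lvert S\rvert^2)$, depending only on $\mathcal{C}$, $D$ and $r$ (and well defined since $D\notin\mathcal{C}$ gives $R^2\neq\lvert S\rvert^2$); this uniformity is exactly what upgrades the paper's sketch into a proof of the ``if and only if''. Two small remarks: when $r^2=R^2-\lvert S\rvert^2$ with $S\neq 0$ your $\phi$ is a translation rather than a homothety, so either choose the free inversion radius $r$ to avoid this degenerate value or read ``homothetic'' as ``homothetic or translates''; and your conclusion is consistent with the paper's independent derivation of the same equivalence as the corollary to Proposition~\ref{proposition:negative_pedal_porism}, which relies on the porisms rather than on the fixed map $\phi$.
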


\begin{proof} Referring to figure ~\ref{fig:2047}.
The poles of  $BC$, $AC$, and $AB$ w.r. to an inversion circle $\mathcal{I}$ centered in  $D$,
are, respectively, the inverses of the feet  of $D$ on the sides of $\triangle{ABC};$ denote them by $A_p,B_p$ and $C_p.$ 
Therefore, the lines  $DC,$ $DB$ and $DA$ are perpendicular on $A_pB_p,$
$A_pC_p$ and $B_pC_p$ respectively.

On the other hand, by the definition of a negative-pedal triangle,
$DC,$ $DB$ and $DA$
are also perpendicular on $A'B'$, $A'C'$ and $B'C'$ respectively.
Therefore, $\triangle{A' B' C'}$ and $\triangle A_p B_p C_p$ are homothetic.
 Their homothety center, $H$ is also the homothety center of their circumcircles.
  Thus
$\mathcal{C}_D$, the circumcircle of the negative-pedal triangle is fixed if and only if $\mathcal{E}'_D$, the circumcircle of the polar is fixed. Since the 
later is the inverse of the Euler circle, it is fixed 
if and only if  $\mathcal{E}_D$ is fixed.
\end{proof}

\section{References}
[A] Akopyan, A.,Zaslavsky, A., 
\emph{Geometry of Connics}, 2
	Amer. Math. Soc., 2007.
	
[Am1]	Ameseder, A., 
\emph{Negative Fusspunktcurven der Kegelschnitte} Archiv Math. u. Phys. 64, 170-176, 1879.

[Am2] Ameseder, A. 
\emph{Theorie der negativen Fusspunktencurven} Archiv Math. u. Phys. 64, 164-169, 1879.
	
[BP] Brianchon, Poncelet,J.V.
\emph{Recherches sur la détermination d’une hyperbole équilatère, au moyen de quatre conditions données}
Annales de Mathématiques pures et appliquées, tome 11 (1820-1821), p. 205-220.

[Ch1] Chasles, M.,
\emph{Théorèmes sur les sections coniques confocales}
Annales de Mathématiques pures et appliquées, tome 18 (1827-1828), p. 269-276.

[Ch] M. Chasles, Traité des sections coniques, Gauthier-Villars, Paris, 1885.


[GRSH] Garcia, R., Reznik, D., Stachel, H., Helman, M.
\emph{A family of constant-areas deltoid
associated with the ellipse.} arXiv. arxiv.org/abs/2006.13166. 1, 2, 11
 (2020).
 
[G]  Gheorghe, L. G. \emph{When Euler (circle) meets Poncelet (porism)}, preprint (2020)

[GSO]   Glaeser,G., Stachel, H., Odehnal, B., \emph{The universe of Conics}, Springer Specktrum, Springer-Verlang Berlin Heidelberg 2016.

[La] Lawrence, J. D. A Catalog of Special Plane Curves. New York: Dover, pp. 46-49, 1972.

[Lo] Lockwood, E. H. A Book of Curves. Cambridge, England: Cambridge University Press, pp. 156-159, 1967.

[OW] Ostermann, A., Wanner, G. (2012). Geometry by Its History. Springer Verlag. 3

[Ox] Oxman,V.
\emph{On the existence of triangles with given circumcircle, incircle and one aditional element}, Forum Geometricorum, 5 (2005) 165-171.

[Pa] Pamfilos, P
\emph{Triangles sharing their Euler circle and circumcircle},
Int. J. of Geometry, 
9 (2020), No 1, 5-24.

[PS] Smarandache, F. and Patrascu, I. 
\emph{
The geometry of the orthological triangles
} (2020). https://digitalrepository.unm.edu/math fsp/260

[P] Poncelet, J.V.,
\emph{Traité des propriétés projectives des figures}, Bachelier, 1822.

[S] Salmon, G.,\emph{ A treatise on conic sections}, Longman, Brown, Green, Longraus, 1855.

[W] Weisstein, E.\emph{ Mathworld. MathWorld–A Wolfram Web Resource. mathworld.wolfram.}
com.(2019) 

[We] Weaver, J. 
\emph{A system of triangles related to a poristic system  },
Amer. Math. Month, 31 (1924), 337-340. 


\bigskip
\bigskip
\bigskip

DEPARTAMENTO DE MATEMÁTICA

UNIVERSIDADE FEDERAL DE PERNAMBUCO

RECIFE, (PE) BRASIL

\textit{E-mail address}:

\texttt{liliana@dmat.ufpe.br}
\bigskip

\bigskip

\end{document}